\numberwithin{equation}{section}
\newtheorem{theorem}{Theorem}[section]
\newtheorem{lemma}[theorem]{Lemma}
\newtheorem{conjecture}[theorem]{Conjecture}
\newtheorem{remark}[theorem]{Remark}
\theoremstyle{definition}
\newcommand{\CC}{\mathbb{C}}
\newcommand{\Frob}{\mathrm{Frob}}
\newcommand{\Res}{\mathrm{Res}}
\newcommand{\Hom}{\mathrm{Hom}}
\newcommand{\triv}{\mathbf{1}}
\newcommand{\grFrob}{\mathrm{grFrob}}
\newcommand{\xx}{\mathbf {x}}
\newcommand{\yy}{\mathbf {y}}
\newcommand{\ttheta}{\boldsymbol \theta}
\newcommand{\xxi}{\boldsymbol \xi}
\newcommand{\symm}{\mathfrak{S}}
\begin{document}

\title[The fermionic theta coinvariant conjecture]
{A proof of the fermionic theta coinvariant conjecture}

\author{Alessandro Iraci, Brendon Rhoades, and Marino Romero}
\address
{D\'epartement de Math\'ematiques \newline \indent
Universit\'e du Qu\'ebec \`a Montr\'eal \newline \indent
Montr\'eal, QC, H2X 3Y7, Canada \newline \indent
\texttt{\textup{iraci.alessandro@uqam.ca}} \newline \newline \indent
Department of Mathematics \newline \indent
University of California, San Diego \newline \indent
La Jolla, CA, 92093-0112, USA \newline \indent
\texttt{\textup{bprhoades@ucsd.edu}} \newline \newline \indent
Department of Mathematics \newline \indent
University of Pennsylvania \newline \indent
Philadelphia, PA, 19104-6395, USA \newline \indent
\texttt{\textup{mar007@sas.upenn.edu}}}

\begin{abstract}
Let $(x_1, \dots, x_n, y_1, \dots, y_n)$ be a list of $2n$ commuting variables, 
$(\theta_1, \dots, \theta_n, \xi_1, \dots, \xi_n)$ be a list of $2n$ anticommuting variables, and
$\CC[\xx_n, \yy_n] \otimes \wedge \{\ttheta_n, \xxi_n\}$ be the algebra generated by these variables.
D'Adderio, Iraci, and Vanden Wyngaerd introduced the {\em Theta operators} on the ring of symmetric functions
and used them to conjecture a formula for the quadruply-graded $\symm_n$-isomorphism type of 
$\CC[\xx_n,\yy_n] \otimes \wedge \{\ttheta_n, \xxi_n\}/I$ where $I$ is the ideal generated by $\symm_n$-invariants with vanishing 
constant term.
We prove their conjecture in the `purely fermionic setting' obtained by setting the commuting variables equal $x_i, y_i$ equal to zero.
\end{abstract}

\maketitle

\section{Introduction}
\label{Introduction}

The {\em diagonal coinvariant ring} $DR_n$ is obtained from the rank $2n$ polynomial ring 
$\CC[\xx_n,\yy_n] = \CC[x_1, \dots, x_n, y_1, \dots, y_n]$ by factoring out the ideal generated by $\symm_n$-invariants 
with vanishing constant term. 
The ring $DR_n$ is a bigraded $\symm_n$-module;
Haiman used algebraic geometry to calculate its isomorphism type \cite{Haiman}. 
In recent years, researchers in algebraic combinatorics studied variants of $DR_n$ involving mixtures of commuting 
and anticommuting variables 
\cite{BRT, Bergeron, DIV, HS, KRNC, KRLefschetz, RWVan, RWSuperpartition, SW, SW2, ZabrockiModule, ZabrockiBlog}.
Drawing terminology from supersymmetry, we will refer to commuting variables as {\em bosonic}
and anticommuting variables as {\em fermionic}.  
D'Adderio, Iraci, and Vanden Wyngaerd conjectured \cite{DIV}
a generalization of Haiman's result involving two sets of bosonic and two sets of 
fermionic variables of which Haiman's result forms the `purely bosonic case'. We prove the `purely fermionic case' of their conjecture.

We begin by fixing some notation.
Let $\CC[\xx_n, \yy_n] \otimes \wedge \{\ttheta_n, \xxi_n\}$ be the tensor product
\begin{equation*}
\CC[x_1, \dots, x_n, y_1, \dots, y_n] \otimes \wedge \{ \theta_1, \dots, \theta_n, \xi_1, \dots, \xi_n \}
\end{equation*}
of a rank $2n$ symmetric algebra with a rank $2n$ exterior algebra. 
This ring carries four independent gradings (two bosonic and two fermionic) and
the diagonal action of the symmetric group $\symm_n$ 
\begin{equation*}
    w \cdot x_i = x_{w(i)} \quad w \cdot y_i = y_{w(i)} \quad w \cdot \theta_i = \theta_{w(i)} \quad 
    w \cdot \xi_i = \xi_{w(i)}
\end{equation*}
preserves this quadrigrading. Writing $I_n$ for the ideal generated by $\symm_n$-invariants with vanishing constant term,
the quotient 
\begin{equation}
    TDR_n \coloneqq (\CC[x_1, \dots, x_n, y_1, \dots, y_n] \otimes \wedge \{ \theta_1, \dots, \theta_n, \xi_1, \dots, \xi_n \})/I_n
\end{equation}
is a quadruply-graded $\symm_n$-module. The notation $TDR_n$ alludes to its status as a `supersymmetric double' of $DR_n$ with
$T$wice as many 
generators.

Let $\Lambda$ denote the ring of symmetric functions in $\xx = (x_1, x_2, \dots )$
over the ground field $\CC(q,t)$.
Given any element $F \in \Lambda$, D'Adderio, Iraci, and 
Vanden Wyngaerd introduced \cite{DIV} a {\em Theta operator}
$\Theta_F: \Lambda \rightarrow \Lambda$ whose definition is recalled in Section~\ref{Background}.
Building on the techniques in \cite{DIV},
D'Adderio and Mellit  \cite{DM} used Theta operators 
to prove the rise version of the {\em Delta Conjecture} 
of Haglund, Remmel, and Wilson \cite{HRW}.
D'Adderio and Romero \cite{DR} rederived and clarified a slew of symmetric function identities using Theta operators,
drastically shortening many of their proofs.

In this paper, we will only consider Theta operators indexed by elementary symmetric functions $e_i \in \Lambda$; we abbreviate
$\Theta_i \coloneqq \Theta_{e_i}$.
If $G \in \Lambda$ has degree $d$ in the $\xx$-variables, then $\Theta_i \, G$ has degree $d+i$ and will typically involve the parameters
$q$ and $t$ (even if $G$ itself does not).
D'Adderio, Iraci, and Vanden Wyngaerd 
conjectured \cite{DIV} that symmetric functions of the form $\Theta_i \Theta_j \nabla e_{n-i-j}$ 
(where $\nabla$ is the Bergeron-Garsia nabla operator) 
determine the quadruply-graded $\symm_n$-structure of $TDR_n$. Deferring various definitions to 
Section~\ref{Background}, their conjecture may be stated as follows.

\begin{conjecture}
\label{coinvariant-conjecture}
{\em (D'Adderio-Iraci-Vanden Wyngaerd \cite{DIV})}
Let $(TDR_n)_{i,j}$ be the piece of $TDR_n$ with homogeneous $\theta$-degree $i$ and $\xi$-degree $j$.
The space $(TDR_n)_{i,j}$ vanishes whenever $i + j \geq n$. If  $i + j < n$ we have
\begin{equation*}
    \grFrob((TDR_n)_{i,j};q,t) = \Theta_i \Theta_j \nabla e_{n-i-j}
\end{equation*}
where $q$ tracks $x$-degree and $t$ tracks $y$-degree.
\end{conjecture}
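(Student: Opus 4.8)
The plan is to establish Conjecture~\ref{coinvariant-conjecture} by induction on $n$, splitting it into (i) the vanishing $(TDR_n)_{i,j}=0$ for $i+j\ge n$, and (ii) the identity $\grFrob((TDR_n)_{i,j})=\Theta_i\Theta_j\nabla e_{n-i-j}$ for $i+j<n$. For (i), a bihomogeneous fermionic monomial of $\theta$-degree $i$ and $\xi$-degree $j$ uses only $i+j$ of the $2n$ anticommuting generators; a straightening argument with the invariant generators $e_1(\ttheta_n)$, $e_1(\xxi_n)$, $\sum_\ell\theta_\ell\xi_\ell$ of $I_n$ then rewrites each such monomial with $i+j\ge n$ as a combination of elements of $I_n$, while on the symmetric-function side $\Theta_i\Theta_j\nabla e_{n-i-j}$ is identically $0$ in this range because $e_m=0$ for $m<0$ and $\Theta_{e_k}$ kills $\Lambda_0$ for $k\ge1$. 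The case $i=j=0$ of (ii) is Haiman's theorem~\cite{Haiman} $\nabla e_n=\grFrob(DR_n)$, which anchors the induction.

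For the inductive step of (ii) I would run parallel recursions for the two sides that implement restriction $\symm_n\downarrow\symm_{n-1}$. On the module side, stratifying $TDR_n$ by the behaviour of $x_n,y_n,\theta_n,\xi_n$ in each class yields short exact sequences of bigraded $\symm_{n-1}$-modules expressing $\Res_{\symm_{n-1}}(TDR_n)_{i,j}$ in terms of the pieces $(TDR_{n-1})_{i',j'}$ with $(i',j')\in\{(i,j),(i-1,j),(i,j-1)\}$ and appropriate $q,t$-shifts. On the symmetric-function side, apply $e_1^\perp$ (which implements $\Res$ on Frobenius images) to $\Theta_i\Theta_j\nabla e_{n-i-j}$ and simplify using the identity $\Theta_{e_k}\nabla e_m=\Delta'_{e_{m-1}}e_{m+k}$ of~\cite{DIV} and the operator calculus of~\cite{DR}, in particular the commutation relations relating $e_1^\perp$ to the $\Theta$- and $\nabla$-operators. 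After matching the two recursions, one recovers $\grFrob((TDR_n)_{i,j})$ from its image under $e_1^\perp$ together with a single extremal statistic — for instance the sign-isotypic component, pinned down by a $q,t$-Catalan-type evaluation — which closes the induction.

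The crux, and the reason the conjecture remains open in general, is that the module-side sequences are a priori only right-exact, so one cannot yet force the two recursions to agree. Haiman's proof of the $i=j=0$ case avoided this by realising $DR_n$ as global sections of a sheaf on the isospectral Hilbert scheme with vanishing higher cohomology; no ``superspace'' analogue of that variety — nor an explicit bihomogeneous basis of $TDR_n$ indexed by combinatorial objects enumerated by $\Theta_i\Theta_j\nabla e_{n-i-j}$ — is presently available, and constructing one is the main obstacle.

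What this strategy does yield unconditionally is the \emph{purely fermionic specialization}: setting $x_i=y_i=0$ collapses $TDR_n$ to the exterior-algebra quotient $\overline{TDR}_n=\wedge\{\ttheta_n,\xxi_n\}/\bar I_n$, with $\bar I_n$ generated by the purely fermionic $\symm_n$-invariants of vanishing constant term, and identifies $(\overline{TDR}_n)_{i,j}$ with the bosonic bidegree-$(0,0)$ component of $(TDR_n)_{i,j}$; thus Conjecture~\ref{coinvariant-conjecture} reduces, upon taking the $q=t=0$ specialization, to the assertion $\grFrob((\overline{TDR}_n)_{i,j})=(\Theta_i\Theta_j\nabla e_{n-i-j})\big|_{q=t=0}$. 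Here the stratification of $\overline{TDR}_n$ by $\theta_n$ and $\xi_n$ produces genuine short exact sequences, because an exterior algebra is free of rank $2$ over the subalgebra omitting one generator, so the inductive scheme above goes through verbatim once the specialized operator identities and the small base cases ($n\le2$) are verified. Carrying out exactly this program is the content of the present paper, with the removal of the bosonic specialization left to future work.
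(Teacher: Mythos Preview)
The statement you were asked to prove is a conjecture which the paper does \emph{not} prove; it remains open, and you correctly flag the obstruction (the module-side filtrations are only right-exact in the presence of bosonic variables). So for the full statement there is no ``paper's own proof'' to compare against---only the fermionic specialization, Theorem~\ref{main-theorem}, is established here.

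For that fermionic case your sketched route and the paper's actual proof are genuinely different. You propose to stratify $FDR_n$ by the presence of $\theta_n,\xi_n$, extract short exact sequences relating it to $FDR_{n-1}$, and run an $e_1^\perp$-based induction on $n$. The paper does none of this. Instead it (i) imports as a black box the Kim--Rhoades formula $\Frob(FDR_n)_{i,j}=s_{(n-i,1^i)}*s_{(n-j,1^j)}-s_{(n-i+1,1^{i-1})}*s_{(n-j+1,1^{j-1})}$, which was proved elsewhere via a bigraded Lefschetz argument; (ii) proves an $h_j^\perp$-recursion (for \emph{all} $j\ge1$, not just $j=1$) on Kronecker products of hook Schurs; (iii) proves a matching $h_j^\perp$-recursion for $\Theta_m\Theta_\ell\nabla e_k|_{q=t=0}$ from a D'Adderio--Romero identity; and (iv) inducts on total degree $m+\ell+k$, using that agreement under every $h_j^\perp$ forces equality. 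The paper never touches the module $FDR_n$ directly after quoting Kim--Rhoades.

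Your fermionic sketch also has a real gap. The claim that ``an exterior algebra is free of rank $2$ over the subalgebra omitting one generator'' does not do the work required: you are omitting two generators ($\theta_n$ and $\xi_n$), so the rank is $4$, and more importantly freeness of the ambient exterior algebra does not descend to the quotient by the invariant ideal. Eliminating $\theta_n,\xi_n$ via $\sum_i\theta_i=\sum_i\xi_i=\sum_i\theta_i\xi_i=0$ introduces a new quadratic relation among the remaining $2(n-1)$ generators, so the graded pieces of your filtration are not a priori isomorphic to $(FDR_{n-1})_{i',j'}$. Establishing the exactness you assert is essentially as hard as computing $\dim(FDR_n)_{i,j}$ outright, which is precisely what the Kim--Rhoades Lefschetz theorem supplies and what the paper simply cites.
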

The case $i = j = 0$ of Conjecture~\ref{coinvariant-conjecture} amounts to setting the fermionic $\theta$ and $\xi$-variables to zero
and is equivalent to Haiman's  two-bosonic result $\grFrob(DR_n;q,t) = \nabla e_n$.
Setting the $y$-variables and $\xi$-variables to zero, Conjecture~\ref{coinvariant-conjecture} reduces to 
the one-bosonic, one-fermionic 
`superspace coinvariant conjecture' of the Fields Institute combinatorics group
(see \cite{ZabrockiModule,ZabrockiBlog}).
If only the $\xi$-variables are set to zero, Conjecture~\ref{coinvariant-conjecture} yields
a two-bosonic, one-fermionic conjecture of Zabrocki \cite{ZabrockiModule}
tied to the {\em Delta operators} on $\Lambda$.

In this paper we give additional evidence for Conjecture~\ref{coinvariant-conjecture}
by proving its purely fermionic case. 
The {\em fermionic diagonal coinvariant ring} 
\begin{equation}
    FDR_n \coloneqq \wedge \{ \ttheta_n, \xxi_n \}/\langle   \wedge \{\ttheta_n, \xxi_n\}^{\symm_n}_+ \rangle
\end{equation}
is obtained from the rank $2n$ exterior algebra $\wedge \{ \ttheta_n, \xxi_n \}$ by modding out by the ideal 
generated by $\symm_n$-invariants with vanishing constant term.
Equivalently, the ring $FDR_n$ is obtained from $TDR_n$ by setting the $x$-variables and $y$-variables equal to zero.
The ring $FDR_n$ is a bigraded $\symm_n$-module.
Jongwon Kim and Rhoades introduced $FDR_n$ in \cite{KRLefschetz}; Jesse Kim and Rhoades used $FDR_n$ as a model
for resolving a set partition of $\{1, \dots, n\}$ into a linear combination of noncrossing set partitions 
\cite{KRNC}.
Our main result is as follows.

\begin{theorem}
\label{main-theorem}
We have $(FDR_n)_{i,j} = 0$ whenever $i + j \geq n$. When $i + j < n$ we have  
\begin{equation*}
    \Frob \, (FDR_n)_{i,j} = \Theta_i \Theta_j \nabla e_{n-i-j}  \mid_{q = t = 0}.
\end{equation*}
\end{theorem}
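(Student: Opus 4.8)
\medskip
\noindent\textit{Proof strategy.}
The plan is to compute the two sides of the claimed identity separately and then match them: on the algebraic side we determine the bigraded $\symm_n$-module structure of $FDR_n$, and on the symmetric function side we evaluate $\Theta_i\Theta_j\nabla e_{n-i-j}$ at $q=t=0$.

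For the algebraic side we begin from the explicit bidegree-refined basis of $FDR_n$ constructed by Jongwon Kim and Rhoades \cite{KRLefschetz, KRNC}; this already yields the bigraded Hilbert series and in particular the vanishing $(FDR_n)_{i,j}=0$ for $i+j\ge n$, which is the first assertion of the theorem. To obtain the $\symm_n$-characters we use a symmetry invisible on the Theta side: since the defining ideal is generated by $\symm_n$-invariants, it is also stable under the copy of $\mathfrak{gl}_2$ that acts on $\wedge\{\ttheta_n,\xxi_n\}$ by simultaneously rotating every pair $(\theta_i,\xi_i)$, and this action commutes with $\symm_n$. Therefore $FDR_n$ is a $\mathfrak{gl}_2\times\symm_n$-module, so
\[
FDR_n=\bigoplus_{\mu}\mathbb{S}_\mu(\CC^2)\otimes M_\mu
\qquad\text{and hence}\qquad
(FDR_n)_{i,j}=\bigoplus_{b=0}^{\min(i,j)}M_{(i+j-b,\,b)},
\]
where $\mu$ ranges over partitions with at most two parts and the $M_\mu$ are $\symm_n$-modules; the problem is reduced to identifying the $M_\mu$. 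The slice in $\xi$-degree $0$ is transparent: $\bigoplus_a(FDR_n)_{a,0}\cong\wedge\{\ttheta_n\}/\langle\theta_1+\cdots+\theta_n\rangle\cong\wedge(\CC^{n-1})$ with $\CC^{n-1}$ the standard representation, so $M_{(a,0)}=\wedge^a(\CC^{n-1})$, the hook-shaped irreducible $S^{(n-a,1^a)}$. For the remaining $M_\mu$ one proceeds by induction on $|\mu|$, extracting $M_{(a,b)}$ as the kernel of the $\mathfrak{gl}_2$-lowering operator on $(FDR_n)_{a,b}$ (equivalently, reading it off the Kim--Rhoades character computation).

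For the symmetric function side the key technical point is that one may \emph{not} set $q=t=0$ inside the composition $\Theta_i\Theta_j(-)$: the naive guess that $\Theta_{e_k}$ degenerates to multiplication by $e_k$ at $q=t=0$ is false, as already $\Theta_{e_1}e_1=s_{(1,1)}\neq e_1^2$. Instead one expands $\Theta_i\Theta_j\nabla e_{n-i-j}$ using the Theta-operator identities and combinatorial formulas developed in the wake of \cite{DIV, DM, DR}, carries the $q,t$-statistics through the entire computation, and only then extracts the terms of statistic $(0,0)$. The result is an explicit Schur-positive symmetric function.

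Finally one compares the two expansions term by term: for $i+j<n$ the Schur expansion of $\sum_b \Frob M_{(i+j-b,\,b)}$ agrees with that of $\Theta_i\Theta_j\nabla e_{n-i-j}\mid_{q=t=0}$, while for $i+j\ge n$ the theorem asserts only the vanishing, already established. The principal obstacle is the interplay of the two fermionic alphabets: the $\xi$-free slice and the vanishing are easy, but pinning down the mixed multiplicity spaces $M_\mu$ (for $\mu$ with two nonzero parts) on one side, and correctly evaluating the composed Theta operators at $q=t=0$ on the other, is where the substance of the argument lies.
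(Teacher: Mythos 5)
There is a genuine gap: your proposal never actually computes either side, and the step where you ``compare the two expansions term by term'' is precisely the content of the theorem, not a proof of it. You correctly observe that one cannot naively specialize $q=t=0$ inside the composition $\Theta_i\Theta_j(-)$, but you offer no substitute mechanism beyond ``expand using Theta-operator identities and carry the statistics through,'' with no formula, no recursion, and no identification of the resulting Schur expansion. Likewise on the module side you ultimately defer to the Kim--Rhoades computation (which is fine --- the paper does too, quoting $\Frob\,(FDR_n)_{i,j}= s_{(n-i,1^i)}\ast s_{(n-j,1^j)}-s_{(n-i+1,1^{i-1})}\ast s_{(n-j+1,1^{j-1})}$), but then you face Kronecker products of hooks, whose Schur expansions (Rosas) are complicated, against an unspecified Schur expansion of the $q=t=0$ Theta expression; ``term by term'' comparison has no way to get off the ground. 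The $\mathfrak{gl}_2$-isotypic reformulation $\bigoplus_b M_{(i+j-b,b)}$ is a reasonable repackaging of the Lefschetz structure already in \cite{KRLefschetz}, but it does not advance the comparison.

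What the missing argument requires, and what the paper supplies, is a common recursive characterization rather than explicit expansions: both sides are shown to satisfy the same recursion under the skewing operators $h_j^{\perp}$, and equality follows because two symmetric functions with vanishing constant term that agree under every $h_j^{\perp}$, $j\geq 1$, are equal (pairing with the $h_\lambda$ basis), combined with induction on total degree. Concretely this needs: (i) a rule for $h_j^{\perp}$ applied to a Kronecker product $s_{\lambda^{(1)}}\ast s_{\lambda^{(2)}}$ in terms of Littlewood--Richardson coefficients, specialized to hooks; (ii) the D'Adderio--Romero identity for $h_j^{\perp}\Theta_m\Theta_\ell\widetilde{H}_k(\xx;q,t)$; and (iii) a lemma justifying the passage between $\nabla e_k$ and $\widetilde{H}_k$ at $t=0$ inside the Theta operators, including a check that no poles at $t=0$ arise, followed by a careful $q\to 0$ case analysis. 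None of these ingredients, nor any alternative route to pin down $\Theta_i\Theta_j\nabla e_{n-i-j}\mid_{q=t=0}$, appears in your outline, so as written the proposal does not constitute a proof. (The vanishing claim $(FDR_n)_{i,j}=0$ for $i+j\geq n$ is indeed immediate from \cite{KRLefschetz}, so that part of your argument is fine.)
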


\begin{remark}
\label{surjection-remark}
The canonical surjection $DR_n \otimes FDR_n \twoheadrightarrow TDR_n$ applies to show that the 
bidegree support assertion in Theorem~\ref{main-theorem} implies the corresponding assertion in
Conjecture~\ref{coinvariant-conjecture}. 
\end{remark}

To prove Theorem~\ref{main-theorem}, we apply a result of 
Jongwon Kim and Rhoades \cite{KRLefschetz} which expresses 
$\Frob \, (FDR_n)_{i,j}$ in terms of Kronecker products of hook-shaped
Schur functions. 
To obtain a recursive structure on the characters 
$\Frob \, (FDR_n)_{i,j},$ we prove a result 
(Lemma~\ref{schur-kronecker-recursion}) on applying the skewing operators
$h_{d}^{\perp}$ to Kronecker products
$s_{\lambda} \ast s_{\mu}$ which may be of independent interest
for studying Kronecker
products in general. For the right-hand side 
$ \Theta_i \Theta_j \nabla e_{n-i-j}  \mid_{q = t = 0}$
of Theorem~\ref{main-theorem},
we apply a
result of D'Adderio and Romero \cite{DR} for applying the operators
$h_d^{\perp}$ to expressions of the form
$\Theta_i \Theta_j  \widetilde{H}_{(n-i-j)}(\xx;q,t)$  and study what happens
at the evaluation $q,t \rightarrow 0$.
We check that the recursions
coincide, and Theorem~\ref{main-theorem} is proven.

The rest of the paper is organized as follows. 
In Section~\ref{Background} we give the required background on symmetric functions
and representation theory. 
In Section~\ref{Proof} we prove Theorem~\ref{main-theorem}.

\section{Background}
\label{Background}
We give background material on symmetric functions and the representation
theory of symmetric groups. We use the operation $F[G]$ of {\em plethysm}
throughout. For a more thorough exposition of this material, we refer the reader to
\cite{HaglundBook}.

As in the introduction, we write $\Lambda = \bigoplus_{n \geq 0} \Lambda_n$ for the 
graded ring of symmetric functions in the variable set 
$\xx = (x_1, x_2, \dots )$ over the ground field $\CC(q,t)$. 
For $n \geq 0$, we write $e_n = e_n(\xx), h_n = h_n(\xx) \in \Lambda_n$ for the {\em elementary}
and {\em complete homogeneous} symmetric functions of degree $n$.

Bases of $\Lambda_n$ are indexed by partitions $\lambda \vdash n$.
The elementary basis $e_{\lambda}$ and homogeneous basis $h_{\lambda}$ are defined by setting
$e_{\lambda} \coloneqq e_{\lambda_1} e_{\lambda_2} \cdots$ and $h_{\lambda} \coloneqq h_{\lambda_1} h_{\lambda_2} \cdots$.
We will also use the basis of {\em Schur functions} $s_{\lambda} = s_{\lambda}(\xx)$ and the basis of 
{\em (modified) Macdonald polynomials} $\widetilde{H}_{\lambda}(\xx;q,t)$.  

The {\em Hall inner product} $\langle -, - \rangle$ on $\Lambda$ is defined by declaring the Schur basis to be orthogonal:
\begin{equation*}
    \langle s_{\lambda}, s_{\mu} \rangle = \delta_{\lambda, \mu}
\end{equation*}
where $\delta_{\lambda,\mu}$ is the Kronecker delta.
Given $F \in \Lambda$, write $F^{\bullet}: \Lambda \rightarrow \Lambda$ for the operator $F^{\bullet}(G) \coloneqq F G$ of multiplication
by $F$. We write $F^{\perp}: \Lambda \rightarrow \Lambda$ for the adjoint of the operator $F^{\bullet}$; it is characterized by 
\begin{equation}
    \langle F^{\perp} G, H \rangle = \langle G, F^{\bullet} H \rangle
\end{equation}
for all $G, H \in \Lambda$. The application of $F^{\perp}$
to a symmetric function is often referred to as
{\em skewing} with respect to $F$.
  
Eigenoperators on the Macdonald basis have proven to be remarkable objects in symmetric function theory.
We use two such Macdonald eigenoperators in this paper. The first is the {\em nabla operator}
$\nabla: \Lambda \rightarrow \Lambda$ defined by
\begin{equation}
    \nabla: \widetilde{H}_{\mu}(\xx;q,t) \mapsto \prod_{(i,j) \in \mu} q^{i-1} t^{j-1} \cdot \widetilde{H}_{\mu}(\xx;q,t)
\end{equation}
where the product is over all cells $(i,j)$ in the Young diagram of $\mu$. 
Similarly, the operator $\Pi: \Lambda \rightarrow \Lambda$ is given by
\begin{equation}
    \Pi: \widetilde{H}_{\mu}(\xx;q,t) \mapsto \prod_{\substack{(i,j) \in \mu \\ (i,j) \neq (1,1)}} (1 - q^{i-1} t^{j-1}) \cdot \widetilde{H}_{\mu}(\xx;q,t)
\end{equation}
where the product is over all cells $(i,j) \neq (1,1)$ in the Young diagram of $\mu$.  We abbreviate the eigenvalue of $\widetilde{H}_{\mu}(\xx;q,t)$ under 
the operator $\Pi$ as
\begin{equation}
  \Pi_{\mu} :=  \prod_{\substack{(i,j) \in \mu \\ (i,j) \neq (1,1)}}
  (1-q^{i-1}t^{j-1}). 
\end{equation}
The omission of $(1,1)$ in this product
assures that the operator $\Pi$ is nonzero and, in fact, invertible.

We are ready to define the Theta operators of \cite{DIV}. 
Given $F = F(\xx) \in \Lambda$, let $F \left[  \frac{\xx}{M} \right]$
be the symmetric function obtained by plethystically evaluating $F$ at $\frac{\xx}{M}$. 
where $M = (1-q)(1-t)$.
The {\em Theta operator} $\Theta_F: \Lambda \rightarrow \Lambda$ is obtained by conjugating the multiplication operator
$F \left[ \frac{\xx}{M} \right]^{\bullet}$ by $\Pi$. That is, we set
\begin{equation}
    \Theta_F \coloneqq  \Pi \circ F\left[ \frac{\xx}{M} \right]^{\bullet} \circ \Pi^{-1}.
\end{equation}
Assuming $F$ is homogeneous,
the operator $\Theta_F$ is homogeneous of degree 
$\mathrm{deg}(F)$ on the graded ring $\Lambda = \bigoplus_{n \geq 0} \Lambda_n$. 
As explained in the introduction, we will only use Theta operators indexed by elementary symmetric functions,
and so abbreviate $\Theta_d \coloneqq \Theta_{e_d}$.

We recall some basic ideas from group representation theory. 
If $G$ is a group and $V_1, V_2$ are $G$-modules, we write $V_1 \otimes V_2$
for their {\em Kronecker product} (or {\em internal product}). This is the 
$G$-module with underlying vector space given by the tensor product of 
$V_1$ and $V_2$ with $G$-module structure 
$g \cdot (v_1 \otimes v_2) := (g \cdot v_1) \otimes (g \cdot v_2)$.
For us, the group $G$ will either be a symmetric group $\symm_n$ or a 
parabolic subgroup $\symm_j \times \symm_{n-j}$ thereof.

If $G$ and $H$ are groups, $V$ is a $G$-module, and $W$ is an $H$-module,
we write $V \boxtimes W$ for the $(G \times H)$-module whose 
underlying vector space is the tensor product of $V$ and $W$ and whose
module structure is determined by 
$(g,h) \cdot (v \otimes w) := (g \cdot v) \otimes (h \cdot w)$.
For us, both $G$ and $H$ will be symmetric groups.
We use the distinct notations $\otimes$ and $\boxtimes$
to avoid confusion in the proof of Lemma~\ref{schur-kronecker-recursion} below.

Irreducible representations of the symmetric group $\symm_n$ over $\CC$
are in bijective correspondence with partitions of $n$.
Given $\lambda \vdash n$, we write $S^{\lambda}$ for the corresponding $\symm_n$-irreducible. 
If $V$ is any finite-dimensional $\symm_n$-module, there are unique multiplicities $c_{\lambda} \geq 0$ such that 
$V \cong \bigoplus_{\lambda \vdash n} c_{\lambda} S^{\lambda}$. The {\em Frobenius image} $\Frob(V) \in \Lambda_n$ is the symmetric function
\begin{equation}
    \Frob(V) \coloneqq \sum_{\lambda \vdash n} c_{\lambda} \cdot s_{\lambda}
\end{equation}
obtained by replacing each irreducible factor with the corresponding Schur function. 
More generally, if $V = \bigoplus_{i \geq 0} V_i$ is a graded $\symm_n$-module with each $V_i$ finite-dimensional, its 
{\em graded Frobenius image} is 
\begin{equation}
    \grFrob(V;q) = \sum_{i \geq 0} \Frob(V_i) \cdot q^i
\end{equation}
and if $V = \bigoplus_{i,j \geq 0} V_{i,j}$ is a bigraded $\symm_n$ module we have the {\em bigraded Frobenius image}
\begin{equation}
    \grFrob(V;q,t) = \sum_{i,j \geq 0} \Frob(V_{i,j}) \cdot q^i t^j.
\end{equation}

 Operations on symmetric functions correspond to operations on symmetric group modules via the Frobenius map. 
 For example, if $V$ is an $\symm_n$-module and $W$ is an $\symm_m$-module, 
 the {\em induction product} of $V$ and $W$ 
 is $V \circ W \coloneqq \mathrm{Ind}_{\symm_n \times \symm_m}^{\symm_{n+m}}(V \boxtimes W)$ 
 where the embedding $\symm_n \times \symm_m \subset \symm_{n+m}$ is obtained by letting $\symm_n$ permute the first $n$ 
 letters and $\symm_m$ permute the last $m$ letters.
 We have
 \begin{equation}
     \Frob(V \circ W) = \Frob(V) \cdot \Frob(W).
 \end{equation}
 Defined for partitions $\lambda \vdash n, \mu \vdash m$, and $\nu \vdash n+m$, the {\em Littlewood-Richardson coefficients}
 $c_{\lambda,\mu}^{\nu}$ are the structure coefficients for this product in the Schur basis. They are characterized by either of the formulas
 \begin{equation}
    S^{\lambda} \circ S^{\mu} \cong \bigoplus_{\nu \vdash m+n} c_{\lambda,\mu}^{\nu} S^{\nu} \quad \quad \text{or} \quad \quad
    s_{\lambda} \cdot s_{\mu} = \sum_{\nu \vdash m+n} c_{\lambda,\mu}^{\nu} \cdot s_{\nu}.
 \end{equation}
 The {\em Littlewood-Richardson rule} gives a combinatorial interpretation of the nonnegative integers $c_{\lambda,\mu}^{\nu}.$

 As another example, the {\em Kronecker product} on the space of degree $n$
 symmetric functions $\Lambda_n$ is the bilinear operation $\ast$ characterized by
 \begin{equation}
     \Frob(S^{\lambda} \otimes S^{\mu}) = s_{\lambda} \ast s_{\mu}  
 \end{equation}
 for all $\lambda, \mu \vdash n$.
 The nonnegative integers $g_{\lambda,\mu,\nu}$ indexed by triples of partitions
 $\lambda, \mu, \nu \vdash n$ determined by 
 $s_{\lambda} \ast s_{\mu} = \sum_{\nu \vdash n} g_{\lambda,\mu,\nu} \cdot s_{\nu}$
 are the {\em Kronecker coefficients}.
 Finding a combinatorial rule for $g_{\lambda,\mu,\nu}$ is a famous
 open problem.

\section{Proof of Theorem~\ref{main-theorem}}
\label{Proof}

Theorem~\ref{main-theorem} asserts an equality of symmetric functions.
Our strategy for proving this equality is to show that both sides satisfy the following recursion.

\begin{lemma}
\label{h-perp-recursion}
Let $F, G \in \Lambda$ be symmetric functions with vanishing constant terms. Suppose that 
$h_j^{\perp} F = h_j^{\perp} G$ for all $j \geq 1$. Then $F = G$.
\end{lemma}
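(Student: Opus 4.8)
The plan is to set $H \coloneqq F - G$ and show that a symmetric function $H$ with vanishing constant term which is annihilated by every skewing operator $h_j^\perp$ ($j \geq 1$) must be zero. This suffices: $F - G$ has vanishing constant term because $F$ and $G$ do, and $h_j^\perp(F - G) = h_j^\perp F - h_j^\perp G = 0$ by the hypothesis and $\CC(q,t)$-linearity of $h_j^\perp$.

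First I would record the standard multiplicativity of skewing: for all $A, B \in \Lambda$ one has $(AB)^\perp = A^\perp B^\perp$, which follows from the defining adjunction via $\langle (AB)^\perp C, D\rangle = \langle C, ABD\rangle = \langle B^\perp C, AD\rangle = \langle A^\perp B^\perp C, D\rangle$ together with nondegeneracy of the Hall pairing. Consequently, for any partition $\mu = (\mu_1, \dots, \mu_\ell)$ with $\ell \geq 1$ we have $h_\mu^\perp = h_{\mu_1}^\perp h_{\mu_2}^\perp \cdots h_{\mu_\ell}^\perp$, and this operator annihilates $H$ since the innermost factor already does.

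Next I would translate this into the Hall pairing. The adjunction applied to the multiplication operator by $h_\mu$ gives $\langle H, h_\mu \rangle = \langle h_\mu^\perp H, 1\rangle$ for every partition $\mu$, where $\langle A, 1\rangle$ is precisely the constant term of $A$ because $1 = h_\emptyset$ spans the degree-zero part of $\Lambda$ and the Schur grading is orthogonal. For nonempty $\mu$ this yields $\langle H, h_\mu\rangle = \langle 0, 1\rangle = 0$, and for $\mu = \emptyset$ it yields $\langle H, 1\rangle = 0$ since $H$ has vanishing constant term. Hence $\langle H, h_\mu\rangle = 0$ for all partitions $\mu$. Since $\{m_\mu\}$ and $\{h_\mu\}$ are dual bases of $\Lambda$ under the Hall inner product, expanding in the monomial basis gives $H = \sum_\mu \langle H, h_\mu\rangle\, m_\mu = 0$, so $F = G$.

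I do not anticipate a genuine obstacle here; the statement is essentially formal. The only points needing a little care are the multiplicativity $h_\mu^\perp = \prod_i h_{\mu_i}^\perp$ and the identification of $\langle -, 1\rangle$ with ``take the constant term,'' both immediate from the definitions recalled in Section~\ref{Background}.
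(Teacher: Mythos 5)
Your proposal is correct and follows essentially the same route as the paper: both arguments pair $F-G$ against the complete homogeneous basis $\{h_\mu\}$, use the adjunction defining $h_j^{\perp}$ together with the vanishing constant term to see all these pairings are zero, and conclude by nondegeneracy of the Hall inner product. The only cosmetic difference is that you peel off every part of $\mu$ and pair with $1$, whereas the paper peels off only $h_{\lambda_1}^{\perp}$ and pairs with $h_{\bar\lambda}$.
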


\begin{proof}
For any partition $\lambda = (\lambda_1, \lambda_2, \lambda_3, \dots )$ with $\lambda_1 > 0$, we have
\begin{equation}
    \langle F, h_{\lambda} \rangle =
    \langle h_{\lambda_1}^{\perp} F, h_{\bar{\lambda}} \rangle =
    \langle h_{\lambda_1}^{\perp} G, h_{\bar{\lambda}} \rangle =
    \langle G, h_{\lambda} \rangle
\end{equation}
where $\bar{\lambda} = (\lambda_2, \lambda_3, \dots )$ and the result follows since the $h_{\lambda}$ form a basis 
of $\Lambda$ and $\langle -, - \rangle$ is an inner product.
\end{proof}

We handle the representation theoretic side $\Frob \, (FDR_n)_{i,j}$ of Theorem~\ref{main-theorem} first.
Our starting point is the following result of Jongwon Kim and Rhoades \cite{KRLefschetz} 
which describes this symmetric function
in terms of Kronecker products of hook-shaped Schur functions.

\begin{theorem}
\label{fdr-isomorphism-type}
{\em (Jongwon Kim-Rhoades \cite{KRLefschetz})}
We have $(FDR_n)_{i,j} = 0$ whenever $i+j \geq n$. When $i+j < n$ we have 
\begin{equation*}
    \Frob \, (FDR_n)_{i,j} = s_{(n-i,1^i)} \ast s_{(n-j,1^j)} - s_{(n-i+1,1^{i-1})} \ast s_{(n-j+1,1^{j-1})}
\end{equation*}
where by convention $s_{(n-i+1,1^{i-1})} \ast s_{(n-j+1,1^{j-1})} = 0$ when $i = 0$ or $j = 0$.
\end{theorem}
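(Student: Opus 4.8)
The plan is to realize $FDR_n$ as an explicit quotient of an exterior algebra on two copies of the standard representation and then feed it to symplectic Lefschetz theory. First I would pin down the $\symm_n$-invariant ideal $I_n$ inside $\wedge\{\ttheta_n,\xxi_n\}$. This ring has a monomial basis indexed by functions $\{1,\dots,n\}\to\{1,\theta,\xi,\theta\xi\}$, with $\symm_n$ permuting the domain; chasing signs shows that the $\symm_n$-span of a basis monomial contributes to the invariants only when its stabilizer acts by the trivial character, which forces the label $\theta$ to be used at most once and the label $\xi$ at most once (two equal $\theta$-labels, resp. $\xi$-labels, are interchanged with a sign by a transposition in the stabilizer), while the even label $\theta\xi$ may be repeated freely. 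Consequently the invariant subalgebra is spanned by the monomials $e^a f^b \rho^c$ with $a,b\in\{0,1\}$ and $c\geq 0$, where $e=\theta_1+\cdots+\theta_n$, $f=\xi_1+\cdots+\xi_n$, and $\rho=\sum_i\theta_i\xi_i$; in particular $I_n=\langle e,f,\rho\rangle$, so $FDR_n=\wedge\{\ttheta_n,\xxi_n\}/\langle e,f,\rho\rangle$.

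Next I would strip off $e$ and $f$. Since $e$ involves only the $\ttheta_n$ and $f$ only the $\xxi_n$, and $\wedge\{\ttheta_n,\xxi_n\}=\wedge\{\ttheta_n\}\otimes\wedge\{\xxi_n\}$, we get $\wedge\{\ttheta_n,\xxi_n\}/\langle e,f\rangle\cong(\wedge\{\ttheta_n\}/\langle e\rangle)\otimes(\wedge\{\xxi_n\}/\langle f\rangle)$. Splitting the permutation module $\CC^n=\triv\oplus S^{(n-1,1)}$ equivariantly, with $e$ a generator of the trivial summand, gives $\wedge\{\ttheta_n\}/\langle e\rangle\cong\wedge(U)$ as graded $\symm_n$-modules with $U\cong S^{(n-1,1)}$, and in particular $\Frob(\wedge^i U)=s_{(n-i,1^i)}$ (interpreting $s_{(n-i,1^i)}=0$ for $i\geq n$). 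Hence $\wedge\{\ttheta_n,\xxi_n\}/\langle e,f\rangle\cong\wedge(U\oplus U')$ with $U\cong U'\cong S^{(n-1,1)}$, whose piece of $\theta$-degree $i$ and $\xi$-degree $j$ is the Kronecker product $\wedge^i U\otimes\wedge^j U'$, of Frobenius image $s_{(n-i,1^i)}\ast s_{(n-j,1^j)}$. The image $\bar\rho$ of $\rho$ is a nonzero $\symm_n$-invariant in the one-dimensional invariant space of $U\otimes U'$; identifying $U'\cong U^{*}$ via the self-dual standard representation makes $W:=U\oplus U^{*}$ a symplectic vector space of dimension $2(n-1)$ with $\bar\rho$ a nonzero scalar multiple of the symplectic form $\omega\in\wedge^2 W$. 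Thus $FDR_n\cong\wedge(W)/\langle\omega\rangle$ as a bigraded $\symm_n$-module.

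Finally I would invoke the symplectic hard Lefschetz theorem for $\wedge(W)$: multiplication $\omega\wedge(-)\colon\wedge^{k-2}W\to\wedge^{k}W$ is injective for $k\leq n-1$ and surjective for $k\geq n$ (this follows from the Lefschetz primitive decomposition $\wedge^{p}W=\bigoplus_{r\geq 0}\omega^{r}\wedge P^{p-2r}$). Both properties respect the $\theta/\xi$-bigrading because $\omega\wedge(-)$ raises each of the two degrees by one, and both are $\symm_n$-equivariant because $\omega$ is $\symm_n$-fixed. Since $(FDR_n)_{i,j}=\operatorname{coker}\bigl(\omega\wedge(-)\colon\wedge^{(i-1,j-1)}W\to\wedge^{(i,j)}W\bigr)$, surjectivity gives $(FDR_n)_{i,j}=0$ for $i+j\geq n$, and for $i+j<n$ injectivity yields a short exact sequence $0\to\wedge^{(i-1,j-1)}W\to\wedge^{(i,j)}W\to(FDR_n)_{i,j}\to 0$ of $\symm_n$-modules, whence
\[
\Frob\,(FDR_n)_{i,j}=\Frob(\wedge^{(i,j)}W)-\Frob(\wedge^{(i-1,j-1)}W)=s_{(n-i,1^i)}\ast s_{(n-j,1^j)}-s_{(n-i+1,1^{i-1})}\ast s_{(n-j+1,1^{j-1})},
\]
the second term being $0$ when $i=0$ or $j=0$ since $\wedge^{-1}=0$.

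The main obstacle is the very first step: correctly identifying the $\symm_n$-invariants of the fermionic double $\wedge\{\ttheta_n,\xxi_n\}$ and verifying $\langle e,f,\rho\rangle=I_n$. This is the exterior-algebra analogue of the fundamental theorem of symmetric functions, and although the orbit-sum/sign bookkeeping sketched above makes it clean, the boundary conventions (notably the collapse at top bidegree, where $\wedge^n$ of the standard representation vanishes) require care. After that, the reduction to $\wedge(W)/\langle\omega\rangle$ and the appeal to symplectic hard Lefschetz are essentially standard.
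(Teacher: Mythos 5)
Your plan is essentially sound, but be aware that the paper does not prove this statement at all: it is imported from Jongwon Kim--Rhoades \cite{KRLefschetz}, and the paper only remarks that the original proof establishes a ``bigraded Lefschetz property'' for the invariant $\rho=\theta_1\xi_1+\cdots+\theta_n\xi_n$. Your proposal reconstructs a proof in the same Lefschetz spirit, with a packaging that buys a real simplification: after identifying the invariant ideal as $\langle e,f,\rho\rangle$ and quotienting by $e$ and $f$, you land in $\wedge(U\oplus U^{*})$ with $U\cong S^{(n-1,1)}$ of dimension $n-1$, where the image $\bar\rho$ is forced (by one-dimensionality of the invariants of $U\otimes U^{*}$) to be a nonzero multiple of the symplectic form, so the classical singly-graded symplectic hard Lefschetz theorem suffices. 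Your terse claim that this ``respects the bigrading'' is correct but deserves its actual reason: $\omega\wedge(-)$ maps each bidegree $(a,b)$ into the single bidegree $(a+1,b+1)$, distinct bidegrees go to distinct bidegrees, and the edge components $\wedge^{k}U\otimes\wedge^{0}U^{*}$ and $\wedge^{0}U\otimes\wedge^{k}U^{*}$ vanish once $k\geq n$; hence total-degree injectivity/surjectivity really does refine to the bidegree-wise statements you use. Two details should be filled in, one of which you flag yourself: (1) that the ideal of positive-degree invariants equals $\langle e,f,\rho\rangle$ --- a bidegree-by-bidegree dimension count of invariants (the trivial appears in $S^{\lambda}\otimes S^{\mu}$ only for $\lambda=\mu$) together with linear independence of the relevant $e^{a}f^{b}\rho^{c}$ away from the top bidegree makes this airtight; and (2) that $\bar\rho\neq 0$ in the quotient, which needs a one-line check, e.g.\ the bidegree-$(1,1)$ invariants of $\wedge\{\ttheta_n,\xxi_n\}$ are spanned by $\rho$ and $ef$, while the invariant part of $\langle e,f\rangle$ in that bidegree is spanned by $ef$ alone. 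With those checks added, your argument proves the theorem independently of the citation.
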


Theorem~\ref{fdr-isomorphism-type} was proven by showing that the $\symm_n$-invariant element 
$\theta_1 \xi_1 + \cdots + \theta_n \xi_n \in \wedge \{ \ttheta_n, \xxi_n \}$ satisfies a kind of `bigraded Lefschetz property'.
While there exist expressions for the Schur expansion of $s_{\lambda} \ast s_{\mu}$ when $\lambda, \mu \vdash n$
are hooks (see e.g. \cite{Rosas}), these formulas are rather complicated.
With an eye towards Lemma~\ref{h-perp-recursion}, we give a recursive rule for applying $h_j^{\perp}$ to an arbitrary Kronecker product 
of Schur functions.

\begin{lemma}
\label{schur-kronecker-recursion}
Let $1 \leq j \leq n$ and let $\lambda^{(1)}, \lambda^{(2)} \vdash n$ be two partitions.
We have 
\begin{equation}
\label{hk-perp-equation}
    h_j^{\perp}(s_{\lambda^{(1)}} \ast s_{\lambda^{(2)}}) =
    \sum_{\substack{\mu \vdash j \\ \nu^{(1)}, \nu^{(2)} \vdash n-j}}
    c_{\mu,\nu^{(1)}}^{\lambda^{(1)}} \cdot c_{\mu,\nu^{(2)}}^{\lambda^{(2)}}
    (s_{\nu^{(1)}} \ast s_{\nu^{(2)}})
\end{equation}
where the $c_{\mu,\nu^{(i)}}^{\lambda^{(i)}}$ are Littlewood-Richardson coefficients.
\end{lemma}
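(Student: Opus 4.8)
The plan is to prove the identity by translating everything into the language of $\symm_n$-modules and their restrictions to parabolic subgroups, where $h_j^\perp$ becomes the operation of restricting to $\symm_j \times \symm_{n-j}$ and then extracting the $\symm_j$-trivial isotypic component (equivalently, the $\symm_{n-j}$-module of $\symm_j$-invariants). Concretely, for a symmetric function $P \in \Lambda_n$ and $1 \le j \le n$ one has $\langle h_j^\perp P, s_\nu \rangle = \langle P, h_j s_\nu \rangle = \langle P, s_{(j)} \cdot s_\nu\rangle$, so if $P = \Frob(V)$ for an $\symm_n$-module $V$ then $h_j^\perp P = \Frob\bigl( \Hom_{\symm_j}(\triv, \Res^{\symm_n}_{\symm_j \times \symm_{n-j}} V) \bigr)$, viewing the $\Hom$-space as an $\symm_{n-j}$-module. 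First I would record this identity cleanly as the engine of the argument.

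Next I would apply it with $V = S^{\lambda^{(1)}} \otimes S^{\lambda^{(2)}}$, so that $\Frob(V) = s_{\lambda^{(1)}} \ast s_{\lambda^{(2)}}$. The key step is to compute $\Res^{\symm_n}_{\symm_j \times \symm_{n-j}}(S^{\lambda^{(1)}} \otimes S^{\lambda^{(2)}})$. Because the Kronecker product is defined via the diagonal action, restriction commutes with $\otimes$: $\Res(S^{\lambda^{(1)}} \otimes S^{\lambda^{(2)}}) \cong \Res(S^{\lambda^{(1)})}) \otimes \Res(S^{\lambda^{(2)}})$ as $\symm_j \times \symm_{n-j}$-modules, where the right-hand $\otimes$ is the Kronecker (internal) product over the group $\symm_j \times \symm_{n-j}$. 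By the branching rule, $\Res^{\symm_n}_{\symm_j \times \symm_{n-j}} S^{\lambda^{(i)}} \cong \bigoplus_{\mu \vdash j,\ \nu^{(i)} \vdash n-j} c^{\lambda^{(i)}}_{\mu,\nu^{(i)}}\, (S^\mu \boxtimes S^{\nu^{(i)}})$. Taking the Kronecker product of two such decompositions over $\symm_j \times \symm_{n-j}$ and using $(S^{\mu} \boxtimes S^{\nu^{(1)}}) \otimes (S^{\mu'} \boxtimes S^{\nu^{(2)}}) \cong (S^\mu \otimes S^{\mu'}) \boxtimes (S^{\nu^{(1)}} \otimes S^{\nu^{(2)}})$ — this is exactly why the excerpt is careful to distinguish $\otimes$ from $\boxtimes$ — gives
\begin{equation*}
\Res(S^{\lambda^{(1)}} \otimes S^{\lambda^{(2)}}) \cong \bigoplus_{\substack{\mu,\mu' \vdash j\\ \nu^{(1)},\nu^{(2)}\vdash n-j}} c^{\lambda^{(1)}}_{\mu,\nu^{(1)}}\, c^{\lambda^{(2)}}_{\mu',\nu^{(2)}}\ (S^\mu \otimes S^{\mu'}) \boxtimes (S^{\nu^{(1)}} \otimes S^{\nu^{(2)}}).
\end{equation*}
Now extract the $\symm_j$-invariants: $\Hom_{\symm_j}(\triv, S^\mu \otimes S^{\mu'}) = \delta_{\mu,\mu'}\,\CC$ because $S^{\mu'} \cong (S^{\mu'})^*$ and $\Hom_{\symm_j}(\triv, S^\mu \otimes S^{\mu'}) \cong \Hom_{\symm_j}(S^{\mu'}, S^{\mu})$. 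This collapses the double sum over $\mu,\mu'$ to a single sum over $\mu = \mu' \vdash j$, leaving the $\symm_{n-j}$-module $\bigoplus_{\mu \vdash j,\ \nu^{(1)},\nu^{(2)} \vdash n-j} c^{\lambda^{(1)}}_{\mu,\nu^{(1)}}\, c^{\lambda^{(2)}}_{\mu,\nu^{(2)}}\ (S^{\nu^{(1)}} \otimes S^{\nu^{(2)}})$. Applying $\Frob$ and the engine identity from the first paragraph yields exactly the right-hand side of \eqref{hk-perp-equation}.

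The main obstacle is purely bookkeeping: one must be scrupulous about which tensor-product symbol ($\otimes$ over $\symm_n$, $\otimes$ over $\symm_j \times \symm_{n-j}$, or $\boxtimes$) is in play at each stage, and verify the compatibility $(S^\mu \boxtimes S^{\nu^{(1)}}) \otimes_{\symm_j \times \symm_{n-j}} (S^{\mu'} \boxtimes S^{\nu^{(2)}}) \cong (S^\mu \otimes_{\symm_j} S^{\mu'}) \boxtimes (S^{\nu^{(1)}} \otimes_{\symm_{n-j}} S^{\nu^{(2)}})$, which is immediate from the definition of the action on a box tensor product but deserves an explicit line. A purely symmetric-function alternative, which I would mention as a cross-check, is to use the standard coproduct/skewing identity $h_j^\perp(A \ast B) = \sum (h_j)_{(1)}^\perp A \ast (h_j)_{(2)}^\perp B$ expressed through the coproduct $\Delta h_j = \sum_{a+b=j} h_a \otimes h_b$ combined with the fact that $\ast$ intertwines with the standard comultiplication on $\Lambda$; but the representation-theoretic route above is cleanest and most self-contained, so that is the one I would write up in full.
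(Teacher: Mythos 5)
Your proposal is correct and follows essentially the same route as the paper's proof: expressing $h_j^{\perp}\Frob\,V$ as $\Frob\,\Hom_{\symm_j}(\triv_{\symm_j},\Res^{\symm_n}_{\symm_j\times\symm_{n-j}}V)$, using that restriction commutes with Kronecker products together with the branching rule, interchanging $\boxtimes$ and $\otimes$, and collapsing the sum via $\dim\Hom_{\symm_j}(\triv_{\symm_j},S^{\mu}\otimes S^{\mu'})=\delta_{\mu,\mu'}$ (which you justify by self-duality of $S^{\mu'}$, equivalent to the paper's character-orthogonality argument). No gaps to report.
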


The proof of Lemma~\ref{schur-kronecker-recursion} requires 
both of the module operations $\boxtimes$ and $\otimes$ introduced
in Section~\ref{Background}.

\begin{proof}
It is well-known that, for any $\symm_n$-module $V$,
the degree $n-k$ symmetric function $h_j^{\perp} \Frob \, V$ has the algebraic interpretation
\begin{equation}
    h_j^{\perp} \Frob \, V = 
    \Frob \, \Hom_{\symm_j}(\triv_{\symm_j}, \Res^{\symm_n}_{\symm_j \times \symm_{n-j}} \, V)
\end{equation}
where the $\Hom_{\symm_j}$-space is an $\symm_{n-j}$-module by means of the
second factor of 
$\symm_j \times \symm_{n-j}$. In our situation, this reads
\begin{align}
    h_j^{\perp} & (s_{\lambda^{(1)}} \ast s_{\lambda^{(2)}}) =
    \Frob \, \Hom_{\symm_j}\left(\triv_{\symm_j}, \Res^{\symm_n}_{\symm_j \times \symm_{n-j}} \,
    (S^{\lambda^{(1)}} \otimes S^{\lambda^{(2)}}) \right) \\
    &=
    \Frob \, \Hom_{\symm_j}\left(\triv_{\symm_j}, (\Res^{\symm_n}_{\symm_j \times \symm_{n-j}} \,
    S^{\lambda^{(1)}}) \otimes
    (\Res^{\symm_n}_{\symm_j \times \symm_{n-j}} S^{\lambda^{(2)}}) \right) \\
    &= 
    \Frob \, \Hom_{\symm_j}\left(\triv_{\symm_j}, 
    \bigoplus_{\substack{\mu^{(1)}, \mu^{(2)} \vdash j \\ \nu^{(1)}, \nu^{(2)} \vdash n-j}} \,
    c_{\mu^{(1)}, \nu^{(1)}}^{\lambda^{(1)}} \cdot 
    c_{\mu^{(2)}, \nu^{(2)}}^{\lambda^{(2)}} \cdot
    (S^{\mu^{(1)}} \boxtimes S^{\nu^{(1)}}) \otimes (S^{\mu^{(2)}} \boxtimes S^{\nu^{(2)}})\right) \\
        &= 
    \Frob \, \Hom_{\symm_j}\left(\triv_{\symm_j}, 
    \bigoplus_{\substack{\mu^{(1)}, \mu^{(2)} \vdash j \\ \nu^{(1)}, \nu^{(2)} \vdash n-j}} \,
    c_{\mu^{(1)}, \nu^{(1)}}^{\lambda^{(1)}} \cdot 
    c_{\mu^{(2)}, \nu^{(2)}}^{\lambda^{(2)}} \cdot
    (S^{\mu^{(1)}} \otimes S^{\mu^{(2)}}) \boxtimes (S^{\nu^{(1)}} \otimes S^{\nu^{(2)}})\right) \\
    &= \sum_{\substack{\mu^{(1)}, \mu^{(2)} \vdash j \\ \nu^{(1)}, \nu^{(2)} \vdash n-j}}
    c_{\mu^{(1)}, \nu^{(1)}}^{\lambda^{(1)}} \cdot 
    c_{\mu^{(2)}, \nu^{(2)}}^{\lambda^{(2)}} \cdot
    \dim \, \Hom_{\symm_j}(\triv_{\symm_j}, S^{\mu^{(1)}} \otimes S^{\mu^{(2)}}) \cdot
    s_{\nu^{(1)}} \ast s_{\nu^{(2)}}
\end{align}
where we used the fact that restriction functors commute with Kronecker products and the 
consequence
\begin{equation}
 \Res^{\symm_n}_{\symm_j \times \symm_{n-j}} \, S^{\lambda} \cong
 \bigoplus_{\substack{\mu \vdash j \\ \nu \vdash n-j}} c_{\mu,\nu}^{\lambda} (S^{\mu} \boxtimes S^{\nu}) \quad
 \quad (\lambda \vdash n)
\end{equation}
of Frobenius reciprocity. 

The multiplicities 
$g_{\rho,\mu^{(1)},\mu^{(2)}} = \dim(\Hom(S^{\rho}, S^{\mu^{(1)}} \otimes S^{\mu^{(2)}}))$
of Schur functions $s_{\rho}$ in general Kronecker products $s_{\mu^{(1)}} \ast s_{\mu^{(2)}}$ are difficult to compute.
However, when $\rho = (j)$ and $S^{\rho} = \triv_{\symm_j}$ as in our setting, character orthogonality gives
\begin{equation}
    \dim(\Hom_{\symm_j}(\triv_{\symm_j}, S^{\mu^{(1)}} \otimes S^{\mu^{(2)}})) = \begin{cases}
    1 & \mu^{(1)} = \mu^{(2)}, \\
    0 & \text{otherwise}.
    \end{cases}
\end{equation}
Adding this information to the above string of equalities gives
\begin{align}
    h_j^{\perp}&(s_{\lambda^{(1)}} \ast s_{\lambda^{(2)}}) \\ &=
 \sum_{\substack{\mu^{(1)}, \mu^{(2)} \vdash j \\ \nu^{(1)}, \nu^{(2)} \vdash n-j}}
    c_{\mu^{(1)}, \nu^{(1)}}^{\lambda^{(1)}} \cdot 
    c_{\mu^{(2)}, \nu^{(2)}}^{\lambda^{(2)}} \cdot
    \dim \, \Hom_{\symm_j}(\triv_{\symm_j}, S^{\mu^{(1)}} \otimes S^{\mu^{(2)}}) \cdot
    s_{\nu^{(1)}} \ast s_{\nu^{(2)}} \\
    &= \sum_{\substack{\mu \vdash j \\ \nu^{(1)}, \nu^{(2)} \vdash n-j}}
    c_{\mu, \nu^{(1)}}^{\lambda^{(1)}} \cdot 
    c_{\mu, \nu^{(2)}}^{\lambda^{(2)}} \cdot
    s_{\nu^{(1)}} \ast s_{\nu^{(2)}}
\end{align}
and our proof is complete.
\end{proof}

Our ability to put Lemma~\ref{schur-kronecker-recursion} to good use is bounded by our understanding
of the products 
$c_{\mu,\nu^{(1)}}^{\lambda^{(1)}} \cdot c_{\mu,\nu^{(2)}}^{\lambda^{(2)}}$ of 
Littlewood-Richardson coefficients
appearing therein.
Thanks to Theorem~\ref{fdr-isomorphism-type}, for our purposes both $\lambda^{(1)}$ and $\lambda^{(2)}$
will be hook-shaped partitions so that these coefficients will be fairly simple.
The situation becomes more complicated for general $\lambda^{(1)}, \lambda^{(2)} \vdash n$,
but Lemma~\ref{schur-kronecker-recursion} could conceivably be useful for studying 
Kronecker products $s_{\lambda^{(1)}} \ast s_{\lambda^{(2)}}$ for partitions other than hooks.

Although we will not need it, for completeness we record the companion Kronecker product recursion involving $e_j^{\perp}$:
\begin{equation}
\label{ek-perp-equation}
    e_j^{\perp}(s_{\lambda^{(1)}} \ast s_{\lambda^{(2)}}) =
    \sum_{\substack{\mu \vdash j \\ \nu^{(1)}, \nu^{(2)} \vdash n-j}}
    c_{\mu',\nu^{(1)}}^{\lambda^{(1)}} \cdot c_{\mu,\nu^{(2)}}^{\lambda^{(2)}}
    (s_{\nu^{(1)}} \ast s_{\nu^{(2)}}).
\end{equation}
Equation~\eqref{ek-perp-equation} has the same right-hand side as Equation~\eqref{hk-perp-equation},
except that in the first Littlewood-Richardson
coefficient the partition $\mu \vdash k$ is replaced by its conjugate $\mu' \vdash k$.
Equation~\eqref{ek-perp-equation} may be proven in the same way as Equation~\eqref{hk-perp-equation}, except one
uses the formula 
\begin{equation}
    \dim(\Hom_{\symm_j}(\mathrm{sign}_{\symm_j}, S^{\mu^{(1)}} \otimes S^{\mu^{(2)}})) = \begin{cases}
    1 & \mu^{(1)} = (\mu^{(2)})', \\
    0 & \text{otherwise}
    \end{cases}
\end{equation}
for the multiplicity of the sign representation in a Kroenecker product of irreducibles.

Our application of Lemma~\ref{schur-kronecker-recursion} may be stated as follows.
In order to motivate the next result and understand its proof, it will be useful to recall that a product 
$e_a h_b$ of an elementary symmetric function with a homogeneous symmetric function is a sum 
\begin{equation}
    \label{eh-to-hook}
    e_a h_b = s_{(b,1^a)} + s_{(b+1,1^{a-1})} 
\end{equation}
of two successive hook-shaped Schur functions.

\begin{lemma}
\label{hook-kronecker-recursion}
For any $j \geq 1$ and any integers $k, \ell,$ and $m$ with $k + \ell + m = n$ we have
\begin{multline}
    h_j^{\perp} \left(s_{(k+\ell,1^m)} \ast s_{(k+m,1^{\ell})} - s_{(k+\ell+1,1^{m-1}} \ast s_{(k+m+1,1^{\ell-1})} \right) \\
    = h_{k + \ell - j} e_m \ast h_{k+m-j} e_{\ell} - h_{k+\ell}e_{m-j} \ast h_{k+m}e_{\ell-j}.
\end{multline}
\end{lemma}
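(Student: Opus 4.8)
The plan is to apply Lemma~\ref{schur-kronecker-recursion} directly to the left-hand side, using the fact that the two arguments of each Kronecker product are hook shapes. Concretely, I would first treat the term $h_j^{\perp}(s_{(k+\ell,1^m)} \ast s_{(k+m,1^{\ell})})$: by Lemma~\ref{schur-kronecker-recursion} this equals $\sum_{\mu \vdash j,\ \nu^{(1)},\nu^{(2)} \vdash n-j} c_{\mu,\nu^{(1)}}^{(k+\ell,1^m)} \cdot c_{\mu,\nu^{(2)}}^{(k+m,1^{\ell})} \cdot (s_{\nu^{(1)}} \ast s_{\nu^{(2)}})$, so everything reduces to understanding which skew shapes $(k+\ell,1^m)/\mu$ and $(k+m,1^{\ell})/\mu$ are Littlewood-Richardson-fillable and with what multiplicity.

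The key computational input is the classical fact that the Littlewood-Richardson coefficient $c_{\mu,\nu}^{(a,1^b)}$ with a hook $(a,1^b)$ on top is always $0$ or $1$, and is $1$ precisely when both $\mu$ and $\nu$ are themselves hooks whose arms and legs fit together correctly: if $\mu = (p,1^q)$ then $c_{\mu,\nu}^{(a,1^b)} = 1$ iff $\nu = (a-p+1, 1^{b-q-1})$ or $\nu = (a-p, 1^{b-q})$ (with the appropriate degenerate cases), i.e. $\nu$ is one of at most two hooks, matching the expansion $s_{(p,1^q)}\cdot s_{\nu}$ containing a single hook term $s_{(a,1^b)}$. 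The cleanest way to package this is via \eqref{eh-to-hook}: $e_q h_{p-1} = s_{(p-1+q... )}$ — more precisely, summing $c_{\mu,\nu}^{(a,1^b)}$ over all hooks $\mu \vdash j$ collapses, after reindexing, to the statement that $\sum_{\mu \vdash j} s_{\mu}^{\perp}(\text{hook})$ picks out exactly the terms appearing in a product $h_{a-j}e_{b} + h_{a-j+1}e_{b-1}$ or similar. I would set this up by observing that $\sum_{\mu \vdash j,\ \mu \text{ a hook}} c_{\mu,\nu^{(1)}}^{(k+\ell,1^m)} c_{\mu,\nu^{(2)}}^{(k+m,1^{\ell})}$, summed against $s_{\nu^{(1)}}\ast s_{\nu^{(2)}}$, yields exactly $h_{k+\ell-j}e_m \ast h_{k+m-j}e_\ell$ by \eqref{eh-to-hook} applied in reverse: the Schur functions appearing in $h_{k+\ell-j}e_m$ are precisely the hooks $\nu^{(1)}$ that arise, and likewise for the second factor, with the shared index $\mu$ forcing the "$j$ boxes removed" bookkeeping to be consistent across both factors. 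The second term $h_j^{\perp}(s_{(k+\ell+1,1^{m-1})} \ast s_{(k+m+1,1^{\ell-1})})$ is handled identically and produces $h_{k+\ell+1-j}e_{m-1} \ast h_{k+m+1-j}e_{\ell-1}$; the claim then follows once one checks that $h_{k+\ell-j}e_m + h_{k+\ell+1-j}e_{m-1} = \ldots$ — wait, rather, that the difference telescopes: $h_{k+\ell-j}e_m \ast h_{k+m-j}e_\ell - h_{k+\ell+1-j}e_{m-1}\ast h_{k+m+1-j}e_{\ell-1}$ equals $h_{k+\ell-j}e_m \ast h_{k+m-j}e_\ell - h_{k+\ell}e_{m-j}\ast h_{k+m}e_{\ell-j}$, which requires noticing that the "bad" hook terms on each side cancel via \eqref{eh-to-hook}.

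I expect the main obstacle to be the careful bookkeeping in the telescoping step: one must verify that after expanding both $h_j^{\perp}$-images via Lemma~\ref{schur-kronecker-recursion} and collapsing the hook LR-coefficients, the overcounted hook-shaped $\nu^{(i)}$ — the ones lying "between" the two hook Schur functions in each product $e_a h_b = s_{(b,1^a)} + s_{(b+1,1^{a-1})}$ — appear with matching coefficients in the positive and negative parts and hence cancel, leaving precisely the two products $h_{k+\ell-j}e_m \ast h_{k+m-j}e_\ell$ and $h_{k+\ell}e_{m-j}\ast h_{k+m}e_{\ell-j}$. A secondary nuisance is handling all the degenerate boundary cases ($m = 0$, $\ell = 0$, $j > k+\ell$, etc.) where some hooks degenerate to a single row or column or a partition becomes empty; here the conventions in Theorem~\ref{fdr-isomorphism-type} and the vanishing of LR-coefficients with negative parts should make everything consistent, but this needs to be checked rather than asserted. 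Modulo these verifications, the argument is a direct substitution into Lemma~\ref{schur-kronecker-recursion} followed by the identity \eqref{eh-to-hook}.
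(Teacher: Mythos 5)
Your overall strategy---expand each Kronecker product with Lemma~\ref{schur-kronecker-recursion}, use that Littlewood--Richardson coefficients with a hook on top force $\mu$ and $\nu$ to be hooks, and package the answer with \eqref{eh-to-hook}---is the same as the paper's, but your central computational claim is false. In Lemma~\ref{schur-kronecker-recursion} the shared partition $\mu \vdash j$ runs over \emph{all} hooks $(j-r,1^r)$, $r = 0,1,\dots,j-1$, and each value of $r$ contributes its own product; the correct identity is
\begin{equation*}
  h_j^{\perp}\bigl(s_{(k+\ell,1^m)} \ast s_{(k+m,1^{\ell})}\bigr)
  \;=\; \sum_{r=0}^{j-1} h_{k+\ell-j+r}\, e_{m-r} \ast h_{k+m-j+r}\, e_{\ell-r},
\end{equation*}
not the single product $h_{k+\ell-j}e_m \ast h_{k+m-j}e_{\ell}$ that you assert. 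For $k=\ell=m=1$, $j=2$ one has $h_2^{\perp}(s_{(2,1)}\ast s_{(2,1)}) = h_2^{\perp}(s_{(3)}+s_{(2,1)}+s_{(1,1,1)}) = 2s_{(1)}$, whereas $h_0e_1\ast h_0e_1 = s_{(1)}$; the missing term is the $\mu=(1,1)$ contribution $h_1e_0\ast h_1e_0$. The same issue affects your second term, and your final step then rests on the false identity $h_{k+\ell+1-j}e_{m-1}\ast h_{k+m+1-j}e_{\ell-1} = h_{k+\ell}e_{m-j}\ast h_{k+m}e_{\ell-j}$ (for the same data the left side is $s_{(1)}$ and the right side is $0$). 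Concretely, your pipeline returns $0$ for $h_2^{\perp}(s_{(2,1)}\ast s_{(2,1)} - s_{(3)}\ast s_{(3)})$, while both sides of the lemma equal $s_{(1)}$; the appeal to ``bad hook terms cancelling via \eqref{eh-to-hook}'' does not supply a mechanism that repairs this.

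The repair is exactly the paper's route: prove the cumulative identity displayed above for all parameters (this is the paper's \eqref{reformulation-one}; note that its upper summation limit should read $j-1$ rather than $j$, consistent with the range $r \in \{0,\dots,j-1\}$ of the leg of $\mu$ in the derivation that follows it), and then subtract the instance with parameters shifted as $(k,\ell,m) \mapsto (k+2,\ell-1,m-1)$, which is the instance corresponding to $s_{(k+\ell+1,1^{m-1})} \ast s_{(k+m+1,1^{\ell-1})}$. The two sums share all but one term each, and the difference telescopes precisely to $h_{k+\ell-j}e_m \ast h_{k+m-j}e_{\ell} - h_{k+\ell}e_{m-j} \ast h_{k+m}e_{\ell-j}$. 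In other words, the telescoping happens across the sum over the leg length $r$ of the shared hook $\mu \vdash j$, not within a single application of $e_a h_b = s_{(b,1^a)} + s_{(b+1,1^{a-1})}$; once you retain that sum over $r$, the rest of your outline (hook Littlewood--Richardson coefficients plus \eqref{eh-to-hook}) goes through as in the paper.
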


\begin{proof}
By telescoping sums, we may (and will) prove the equivalent assertion
\begin{equation}
\label{reformulation-one}
    h_j^{\perp}(s_{(k+\ell,1^m)} \ast s_{(k+m,1^{\ell})}) = 
    \sum_{r = 0}^j h_{k + \ell - j + r} e_{m-r} \ast h_{k + m - j+ r} e_{\ell - r}.
\end{equation}
Applying Equation~\eqref{eh-to-hook}, the right-hand side of Equation~\eqref{reformulation-one}
reads
\begin{multline}
    \label{equation-two}
    \sum_{r = 0}^j h_{k + \ell - j + r} e_{m-r} \ast h_{k + m - j+ r} e_{\ell - r} \\= 
    \sum_{r = 0}^j \left(s_{(k+\ell-j+r,1^{m-r})} + s_{(k+\ell-j+r+1,1^{m-r-1})}  \right) \ast
    \left( s_{(k+m-j+r,1^{\ell-r})} + s_{(k+m-j+r+1,1^{\ell-r-1})} \right).
\end{multline}
As for the left-hand side of Equation~\eqref{reformulation-one},
Lemma~\ref{schur-kronecker-recursion} yields
\begin{equation}
    \label{equation-three}
    h_j^{\perp} \left( s_{(k+\ell,1^m)} \ast s_{(k+m,1^{\ell})}  \right) = 
    \sum_{\substack{\mu \vdash j \\ \nu^{(1)}, \nu^{(2)} \vdash n-j}}
    c_{\mu,\nu^{(1)}}^{(k+\ell,1^m)} \cdot c_{\mu,\nu^{(2)}}^{(k+m,1^{\ell})} 
    (s_{\nu^{(1)}} \ast s_{\nu^{(2)}}).
\end{equation}
The Littlewood-Richardson coefficient $c_{\mu,\nu^{(1)}}^{(k+\ell,1^m)}$ is nonzero only when 
both $\mu$ and $\nu^{(1)}$ are hooks.
In this case, we have $\mu = (j-r,1^r)$ for some $r \in \{0,1,\dots,j-1\}$ and
the Littlewood-Richardson Rule implies
\begin{equation}
    \label{equation-four}
    s_{(j-r,1^r)} s_{(c,1^{n-j-c})} = s_{(j-r+c,1^{r+n-j-c})} +  
    s_{(j-r+c-1,1^{r+n-j-c+1})} + R
\end{equation}
where $R$ is a sum of Schur functions indexed by non-hook partitions.
Therefore, the Schur function $s_{(k+\ell,1^m)}$ appears in this product 
precisely when $j-r+c = k+\ell$ or $j-r+c-1 = k + \ell$ or, in other words,
\begin{center}
    $c = k + \ell - j + r \quad \quad$ or $\quad \quad c = k+\ell-j+r+1.$
\end{center}
Similarly, the Schur function $s_{(k+m,1^{\ell})}$ appears in the product
$s_{(j-r,1^r)} s_{(c,1^{n-j-c})}$ precisely when
\begin{center}
    $c = k + m - j + r \quad \quad$ or $\quad \quad c = k+m-j+r+1.$
\end{center}
This means that 
\begin{align}
    \sum_{\substack{\mu \vdash j \\ \nu^{(1)}, \nu^{(2)} \vdash n-j}}
    &c_{\mu,\nu^{(1)}}^{(k+\ell,1^m)} \cdot c_{\mu,\nu^{(2)}}^{(k+m,1^{\ell})} 
    (s_{\nu^{(1)}} \ast s_{\nu^{(2)}}) \\
    =& \sum_{r=0}^{j-1} \sum_{c,d} c_{(j-r,1^r),(c,1^{n-j-c})}^{(k+\ell,1^m)} \cdot
    c_{(j-r,1^r),(d,1^{n-j-d})}^{(k+m,1^{\ell})}
    \left( s_{(c,1^{n-j-c})} \ast s_{(d,1^{n-j-d})} \right) \\
    =& \sum_{r = 0}^j \left(s_{(k+\ell-j+r,1^{m-r})} + s_{(k+\ell-j+r+1,1^{m-r-1})}  \right) \ast 
    \left( s_{(k+m-j+r,1^{\ell-r})} + s_{(k+m-j+r+1,1^{\ell-r-1})} \right).
\end{align}
and applying Equation~\eqref{equation-two} finishes the proof.
\end{proof}

We turn to the Theta operator side of Theorem~\ref{main-theorem}. Our starting point is an identity of 
D'Adderio and Romero \cite[Theorem 8.2]{DR}.  
We use the standard $q$-analogs
\begin{equation}
    [n]_q \coloneqq 1 + q + \cdots + q^{n-1} \quad \quad [n]!_q \coloneqq [n]_q [n-1]_q \cdots [1]_q \quad \quad 
    {n \brack k}_q \coloneqq \frac{[n]!_q}{[k]!_q \cdot [n-k]!_q}
\end{equation}
of numbers, factorials, and binomial coefficients together with the convention
${n \brack k}_q = 0$ whenever $k < 0$ or $k > n$.
We will also need the symmetric functions $E_{n,k} = E_{n,k}(\xx;q)$ introduced in \cite{GH}
which may be defined plethystically by
\begin{equation}
E_{n,k} \coloneqq q^k \sum_{r=0}^k q^{{r \choose 2}} {k \brack r}_q (-1)^r 
e_n\left[  
\xx \frac{1-q^{-r}}{1-q}
\right].
\end{equation}

\begin{theorem}
\label{first-theta-recursion}
{\em (D'Adderio-Romero \cite{DR})}
For any integers $j, m, \ell,$ and $k$ we have
\begin{multline*}
    h_j^{\perp} \Theta_m \Theta_{\ell} \widetilde{H}_k(\xx;q,t) = 
    \sum_{r = 0}^j {k \brack r}_q \sum_{a = 0}^k \sum_{b = 1}^{j-r+a} 
    \Theta_{m-j+r} \Theta_{\ell + k - j - a} \nabla E_{j-r+a,b} \\
    \times \left( 
    q^{{k-r-a \choose 2}} {b-1 \brack a}_q {b+r-a-1 \brack k-a-1}_q +
    q^{{k-r-a+1 \choose 2}} {b-1 \brack a-1}_q {b+k-r-a \brack k-a}_q
    \right).
\end{multline*}
\end{theorem}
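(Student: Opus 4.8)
\emph{Overall strategy.}
The plan is to compute the left-hand side directly — reducing the one-row modified Macdonald polynomial to a $q$-deformation of $h_k$, pushing the skewing operator $h_j^{\perp}$ through each Theta operator in turn, and then recognising the resulting sum in the form $\sum \Theta\,\Theta\,\nabla E_{\bullet,\bullet}$ on the right. The $E_{n,k}$ appear precisely because, after the Theta operators have acted, that is the basis in which the answer organises itself; the only facts about them I need are elementary consequences of the plethystic definition given above, namely $\sum_k E_{n,k} = e_n$ and a skew rule $h_d^{\perp} E_{n,k} = \sum q^{\bullet}{\bullet \brack \bullet}_q E_{n-d,\bullet}$, together with the action of $\nabla$ on the $\widetilde H$-basis.

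\emph{Reductions.}
First I would use the classical fact that the one-row modified Macdonald polynomial is $t$-independent with closed form $\widetilde H_{(k)}(\xx;q,t) = (q;q)_k\, h_k\!\left[\tfrac{\xx}{1-q}\right]$, so that adding a single new variable, $\widetilde H_{(k)}[\xx + z]$, and skewing, $h_r^{\perp}\widetilde H_{(k)}$, are completely explicit and already carry the Gaussian binomials ${k \brack r}_q$ of the outer sum. Next, the key structural input is a ``skew-Pieri'' identity describing how $h_j^{\perp}$ commutes past a single Theta operator $\Theta_{e_a} = \Pi \circ e_a[\xx/M]^{\bullet}\circ\Pi^{-1}$: using $\sum_{j\ge 0}z^j h_j^{\perp}F = F[\xx + z]$ and pushing $\xx\mapsto\xx+z$ through the conjugation, one must control the failure of $\Pi$ to commute with adding a variable and expand $e_a\!\left[\tfrac{\xx+z}{M}\right] = e_a\!\left[\tfrac{\xx}{M} + \tfrac{z}{M}\right]$ as a finite sum whose one-variable factors $e_i\!\left[\tfrac{z}{M}\right]$ produce exactly the $q$-powers $q^{\binom{i}{2}}$ and $q$-binomials in the statement. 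Applying this once for $\Theta_\ell$ and once for $\Theta_m$ yields the two shifts $m\mapsto m-j+r$ and $\ell\mapsto\ell+k-j-a$, with the inner sums $\sum_{a}\sum_{b}$ coming from the splitting of the one-row factor and from the $E_{\bullet,b}$ created by $\Theta$ acting on the split pieces.

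\emph{Main obstacle.}
After all substitutions, the identity reduces to a finite summation identity among products of Gaussian binomials — a $q$-Vandermonde / $q$-Pfaff--Saalsch\"utz type statement — and this is where the real work lies: (a) verifying that $q$-binomial identity, which is what forces the precise two-term coefficient
\[
q^{\binom{k-r-a}{2}} {b-1 \brack a}_q {b+r-a-1 \brack k-a-1}_q + q^{\binom{k-r-a+1}{2}} {b-1 \brack a-1}_q {b+k-r-a \brack k-a}_q,
\]
and (b) checking that all edge conventions (${n \brack k}_q = 0$ for $k<0$ or $k>n$, and the vanishing of $E_{n,k}$ outside its range) make the two sides' summation ranges match. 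I would reduce the burden in (a) by first treating the one-operator case $m=0$ and then invoking the skew-Pieri identity a second time purely formally. A conceptually cleaner but computationally heavier route is to expand everything in the $\widetilde H_\mu$ basis and apply the Macdonald--Pieri rule for $e_a[\xx/M]^{\bullet}\widetilde H_\mu$ and the dual Pieri rule for $h_j^{\perp}\widetilde H_\mu$ directly, turning the claim into an identity of Macdonald--Pieri coefficients.
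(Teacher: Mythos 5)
This theorem is not proved in the paper at all: it is quoted verbatim from D'Adderio--Romero \cite[Theorem 8.2]{DR}, so there is no internal argument to compare yours against, and any self-contained proof would in effect have to reconstruct the relevant machinery of \cite{DR}. Judged on its own terms, your outline assembles the right raw materials --- the closed form $\widetilde H_{(k)}(\xx;q,t)=(q;q)_k\,h_k\!\left[\xx/(1-q)\right]$, the translation identity $\sum_{j\ge 0} z^j h_j^{\perp}F = F[\xx+z]$, and the definition $\Theta_{e_a}=\Pi\circ e_a[\xx/M]^{\bullet}\circ\Pi^{-1}$ --- but it leaves unaddressed precisely the two steps that carry all the difficulty.

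First, ``control the failure of $\Pi$ to commute with adding a variable'' is not a proof step. The operator $\Pi$ is defined spectrally on the Macdonald basis, and $F\mapsto F[\xx+z]$ scrambles that basis through the Macdonald dual Pieri rule with nontrivial rational coefficients in $q,t$; there is no formal conjugation computation that pushes $h_j^{\perp}$ past a Theta operator. Producing such a skew-Pieri rule for $\Theta$ (equivalently, for $\Pi$) is exactly the content of the identities, Pieri-coefficient summation formulas, and $\nabla E_{n,k}$ expansions that \cite{DR} develops before stating this theorem, so your plan assumes the key lemma rather than proving it. Second, nothing in the sketch explains how $\nabla E_{j-r+a,b}$ arises with the stated coefficients: the facts you allow yourself about the $E_{n,k}$ (that they sum to $e_n$ and admit a skewing rule) concern $E_{n,k}$ itself, not $\nabla E_{n,k}$, and the two-term coefficient in the statement cannot be ``forced'' by a $q$-Vandermonde or $q$-Pfaff--Saalsch\"utz identity until one has an explicit structural expansion on the left to compare it with. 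As it stands the proposal is a plausible table of contents; without the missing commutation rule the promised reduction to $q$-binomial identities never materialises.
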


We will use Theorem~\ref{first-theta-recursion} to obtain a recursion for the Theta operators at $q=t=0$. In order to do this, we must first replace 
the left-hand side of Theorem~\ref{first-theta-recursion}
with something closer to the expression
$\Theta_i \Theta_j \nabla e_n \mid_{q=t=0}$.
This is accomplished with the following lemma.

\begin{lemma}
\label{nabla-Hk-t=0-lemma}
For any integers $m, \ell,$ and $k$ we have
\begin{equation}
    \Theta_m \Theta_\ell \nabla e_k \mid_{t=0} =  \Theta_m \Theta_\ell \widetilde{H}_k(\xx;q,t) \mid_{t=0}. \label{nabla-Hk-t=0}
\end{equation}
\end{lemma}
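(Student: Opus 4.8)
The plan is to exploit the fact that $\nabla$ and the Theta operators are all eigenoperators on the modified Macdonald basis, so the claimed identity reduces to a statement about the $t=0$ specializations of the relevant eigenvalues and of the Schur (or Macdonald) expansion of $e_k$. First I would recall the Macdonald expansion $e_k = \sum_{\mu \vdash k} \frac{\widetilde{H}_\mu(\xx;q,t) \cdot \text{(something in } q,t)}{\text{(norms)}}$; concretely one uses the well-known formula $\nabla e_k = \sum_{\mu \vdash k} \frac{\widetilde{H}_\mu \cdot MB_\mu \Pi_\mu}{w_\mu}$ (in Haglund's normalization), and more simply $e_k = \sum_\mu \frac{\widetilde{H}_\mu(\xx;q,t)}{w_\mu} \cdot (\text{coefficient})$. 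The key point is that both $\nabla e_k$ and $\widetilde H_k(\xx;q,t)$, after applying $\Theta_m\Theta_\ell$, become $\CC(q,t)$-linear combinations of the symmetric functions $\Theta_m\Theta_\ell \widetilde{H}_\mu$, and I want to show every term with $\mu \neq (k)$ — i.e. $\mu$ not a single row — contributes $0$ after setting $t=0$, while the $\mu=(k)$ terms on both sides agree at $t=0$.

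The second step is to identify which terms survive at $t=0$. For a partition $\mu \vdash k$ with more than one row, the diagram contains a cell $(i,j)$ with $j \geq 2$, hence with $i-1 \geq 0$ and $j - 1 \geq 1$; I would track where a factor of $t$ is forced. On the $\nabla e_k$ side, the coefficient of $\widetilde H_\mu$ in $\nabla e_k$ carries the eigenvalue $\prod_{(i,j)\in\mu} q^{i-1}t^{j-1}$, which vanishes at $t=0$ as soon as $\mu$ has a cell with $j \geq 2$, i.e. for every $\mu \neq (1^k)$. That leaves only $\mu = (1^k)$ from $\nabla e_k$, which is the wrong single-part partition. So this naive approach needs a correction: the statement is about $\nabla e_k$, and I should instead note $\nabla e_k \mid_{t=0}$ is governed by the column partition, so I would pass through the identity relating $\nabla e_k$ at $t=0$ to a $\widetilde H$-expansion more carefully — specifically using that $\widetilde{H}_{(1^k)}(\xx;q,t)\mid_{t=0}$ and $\widetilde H_{(k)}(\xx;q,t)\mid_{t=0}$ are related by the $q \leftrightarrow t$ / conjugation symmetry $\widetilde H_\mu(\xx;q,t) = \widetilde H_{\mu'}(\xx;t,q)$, together with $\widetilde H_{(k)}(\xx;q,t) = \widetilde H_{(k)}(\xx;q) = \sum_{\lambda\vdash k} \frac{\text{stuff}}{}$ being independent of $t$, so that $\widetilde H_{(1^k)}(\xx;0,t) = \widetilde H_{(k)}(\xx;t,0)$. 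Chasing these specializations through, both $\Theta_m\Theta_\ell \nabla e_k \mid_{t=0}$ and $\Theta_m\Theta_\ell \widetilde H_k(\xx;q,t)\mid_{t=0}$ should collapse to the same expression.

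Concretely, the cleanest route I would actually try first: write $\nabla e_k$ in the Macdonald basis, apply $\Theta_m\Theta_\ell$ termwise (both are eigenoperator conjugates, so they preserve the expansion cleanly), and observe that $\Theta_m\Theta_\ell \widetilde H_\mu$ is a single well-understood object; then set $t = 0$ and show all $\mu$ with at least two columns die, reducing $\nabla e_k\mid_{t=0}$ to the single column $\mu = (1^k)$ term, and independently expand $\widetilde H_k(\xx;q,t) = \widetilde H_{(k)}$ — which is $t$-free — noting it equals, after the $e_k$-normalization bookkeeping, exactly the same surviving piece. The identity then follows since $\Theta_m\Theta_\ell$ is applied to equal inputs. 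I expect the \textbf{main obstacle} to be the bookkeeping of the $q,t$-coefficients (the $w_\mu$ norms, the $B_\mu$ and $\Pi_\mu$ factors) in the Macdonald expansion of $\nabla e_k$ and verifying that the $t=0$ specialization of the $\mu=(1^k)$ coefficient matches the $t=0$ specialization of the corresponding coefficient in $\widetilde H_{(k)}$ — a finite but fiddly computation. A secondary subtlety is making sure the degenerate ranges of $m,\ell$ (where $\Theta_m$ or $\Theta_\ell$ is the identity or zero) are handled uniformly, but these follow from the same eigenoperator formalism.
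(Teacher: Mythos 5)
Your proposal rests on a false premise: the Theta operators are \emph{not} eigenoperators on the modified Macdonald basis. By definition $\Theta_F = \Pi \circ F\left[ \frac{\xx}{M} \right]^{\bullet} \circ \Pi^{-1}$ is a conjugated multiplication operator that raises degree by $\deg F$, so $\Theta_m \Theta_\ell \widetilde{H}_{\mu}(\xx;q,t)$ is not a scalar multiple of $\widetilde{H}_{\mu}$ but a linear combination $\sum_{\rho} d_{\mu,\rho}(q,t) \frac{\Pi_\rho}{\Pi_\mu} \widetilde{H}_{\rho}(\xx;q,t)$ with rational-function coefficients. Consequently your central step --- expand $\nabla e_k$ in the Macdonald basis, apply $\Theta_m\Theta_\ell$ ``termwise,'' then set $t=0$ and discard the terms whose coefficients vanish at $t=0$ --- is not justified as stated: a coefficient of $\widetilde{H}_{\mu}$ in $\nabla e_k$ that vanishes at $t=0$ could in principle be cancelled by a pole at $t=0$ in $\Theta_m\Theta_\ell \widetilde{H}_{\mu}$. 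The actual mathematical content of the lemma is exactly this regularity statement: one must prove that $\Theta_{e_\lambda}\widetilde{H}_{\mu}(\xx;q,t)$ has no pole at $t=0$, which the paper does by comparing the $t=0$ specialization of $e_\lambda\left[\frac{\xx}{M}\right]\widetilde{H}_{\mu}(\xx;q,t)$ with the expansion of $e_\lambda\left[\frac{\xx}{1-q}\right]\widetilde{H}_{\mu}(\xx;q,0)$ in the modified Hall--Littlewood basis, together with the observation that $\Pi_\rho/\Pi_\mu$ is regular at $t=0$. Your closing remark about ``fiddly bookkeeping of the $q,t$-coefficients'' points at coefficient matching, not at this pole issue, so the essential idea is missing rather than merely unfinished.

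A secondary gap: your identification of which Macdonald terms of $\nabla e_k$ survive at $t=0$ is left unresolved (you oscillate between $(1^k)$ and $(k)$ and invoke the conjugation symmetry without carrying the computation through). The paper avoids this entirely via the clean identity $\nabla e_k \mid_{t=0} = \widetilde{H}_{(k)}(\xx;q,t)$, obtained representation-theoretically: both sides are Frobenius images of coinvariant rings (the diagonal coinvariants with one set of bosonic variables set to zero, versus the classical coinvariant ring). Once that identity and the interchange statement $\left(\Theta_{e_\lambda} G\right)\mid_{t=0} = \left(\Theta_{e_\lambda}\left(G\mid_{t=0}\right)\right)\mid_{t=0}$ are both established, the lemma follows immediately; your proposal would need both ingredients supplied, and in particular a proof of the no-pole claim, to be repaired.
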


\begin{proof}
The first step is to note that 
\begin{equation}
\label{ht-to-nabla}
    \widetilde{H}_k(\xx;q,t) = \nabla e_k \mid_{t=0}.
\end{equation}
The left side of Equation~\eqref{ht-to-nabla}
is well-known to be the graded Frobenius image of the coinvariants of the symmetric group with a single set of bosonic variables. The right side is the bigraded Frobenius image of the coinvariants with two sets of bosonic variables, with the x-variables set to zero. Therefore, the two sides are the same.

We claim that for any symmetric function $G$, we have
\begin{equation}
\label{G-at-zero}
    \left(\Theta_{e_\lambda} G \right) \mid_{t=0} =  \left(\Theta_{e_\lambda} \left( G \mid_{t=0} \right) \right) \mid_{t=0}.
\end{equation}
It is sufficient to verify
\eqref{G-at-zero} over a basis. If $G = \widetilde{H}_\mu(\xx;q,t)$, then $G\mid_{t=0} = \widetilde{H}_\mu(\xx;q,0)$ is a modified Hall-Littlewood symmetric function; these also give a basis for symmetric functions. Let $D_{\mu,\nu}(q,t)$ be the coefficients in the expansion
\begin{equation}
\label{D-expansion}
    \widetilde{H}_\mu(\xx;q,0) = \sum_{\nu} D_{\mu,\nu}(q,t) \widetilde{H}_\nu(\xx;q,t)
\end{equation}
Setting $t=0$, we find that
\begin{equation}
    D_{\mu,\nu}(q,0) = \delta_{\nu, \mu}.
\end{equation}
Now, applying Theta operators to both sides of \eqref{D-expansion}, we find
\begin{equation}
\label{Theta-D-expansion}
    \Theta_{e_\lambda}\widetilde{H}_\mu(\xx;q,0) = \sum_{\lambda} D_{\mu,\nu}(q,t) \Theta_{e_\lambda} \widetilde{H}_\nu(\xx;q,t).
\end{equation}

Assuming that $\Theta_{e_\lambda} \widetilde{H}_\nu(\xx;q,t)$ has no poles at $t=0$, then setting $t=0$ on both sides of \eqref{Theta-D-expansion} we get
\begin{align}
    \Theta_{e_\lambda}\widetilde{H}_\mu(\xx;q,0) \mid_{t=0} & = \sum_{\lambda} D_{\mu,\nu}(q,0) \Theta_{e_\lambda} \widetilde{H}_\nu(\xx;q,t) \mid_{t=0} \\
    & = \Theta_{e_\lambda} \widetilde{H}_\mu(\xx;q,t) \mid_{t=0},
\end{align}
which proves \eqref{G-at-zero}. To show that there are no poles at $t=0$, we see that by definition of Theta operators,
\begin{equation}
    \Theta_{e_\lambda} \widetilde{H}_\mu(\xx;q,t) = \sum_{\rho} d^{\lambda}_{\mu,\rho}(q,t) \frac{\Pi_\rho}{\Pi_\mu} \widetilde{H}_\rho(\xx;q,t),
\end{equation}
where $d^{\lambda}_{\mu,\rho}(q,t)$ is the coefficient of $\widetilde{H}_\rho(\xx;q,t)$ in 
$e_\lambda[\xx/M]\widetilde{H}_\mu(\xx;q,t) $. At $t=0$, this product gives
\begin{align}
e_\lambda\left[ \frac{\xx}{M} \right]\widetilde{H}_\mu(\xx;q,t) \mid_{t=0}
& = e_\lambda\left[ \frac{\xx}{1-q} \right]\widetilde{H}_\mu(\xx;q,0)  \\ &=
\sum_{\rho} C_{\mu,\rho}(q) \widetilde{H}_\rho(\xx;q,0) 
\end{align}
for some coefficients $C_{\mu,\rho}(q)$. Since the modified Hall-Littlewood symmetric functions are a basis, we must then have 
$ d^{\lambda}_{\mu,\rho}(q,0) = C_{\mu,\rho}(q)$. This means $d^{\lambda}_{\mu,\rho}(q,t)$ has no pole at $t=0$, and since $\Pi_\rho/\Pi_\mu$ also has no pole at $t=0$, we can conclude that \eqref{G-at-zero} holds
for any symmetric function $G$, proving the lemma.
\end{proof}

With Lemma~\ref{nabla-Hk-t=0-lemma} in hand, we apply 
Theorem~\ref{first-theta-recursion}
to get a recursive expression for the action of $h_j^{\perp}$
on the symmetric functions
$\Theta_m \Theta_{\ell} \nabla e_k \mid_{q = t = 0}$ of 
Theorem~\ref{main-theorem}.

\begin{lemma}
For any integers $j,m,\ell,$ and $k$ we have
\begin{align*}
    h_j^{\perp} \Theta_m \Theta_{\ell} \nabla e_k \mid_{q = t = 0} = 
    &\sum_{r = 0}^j \Theta_{m-r} \Theta_{\ell - r} \nabla e_{k-j+2r} \mid_{q = t = 0} \\
    &+ \sum_{r = 0}^{j-1} (\Theta_{m-r-1} \Theta_{\ell-r} + \Theta_{m-r} \Theta_{\ell-r-1}) 
    \nabla e_{k-j+2r+1} \mid_{q=t=0} \\
    &+ \sum_{r=0}^{j-2} \Theta_{m-r-1} \Theta_{\ell-r-1} \nabla e_{k-j+2r+2} \mid_{q=t=0}.
\end{align*}
\end{lemma}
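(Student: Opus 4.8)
The plan is to derive this recursion by specializing Theorem~\ref{first-theta-recursion} at $q = t = 0$, using Lemma~\ref{nabla-Hk-t=0-lemma} to replace $\widetilde{H}_k(\xx;q,t)$ with $\nabla e_k$ on the left-hand side. Concretely, Lemma~\ref{nabla-Hk-t=0-lemma} gives $\Theta_m \Theta_\ell \nabla e_k \mid_{t=0} = \Theta_m \Theta_\ell \widetilde{H}_k(\xx;q,t) \mid_{t=0}$, and since $h_j^\perp$ commutes with the evaluation at $t = 0$ (it is a linear operator with rational-function-free coefficients), applying $h_j^\perp$ to both sides and then further setting $q = 0$ reduces the left side of our target identity to the $q=t=0$ evaluation of the right-hand side of Theorem~\ref{first-theta-recursion}. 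So the entire problem becomes: evaluate
\[
\sum_{r=0}^j {k \brack r}_q \sum_{a=0}^k \sum_{b=1}^{j-r+a} \Theta_{m-j+r}\Theta_{\ell+k-j-a} \nabla E_{j-r+a,b} \cdot \left(q^{{k-r-a \choose 2}}{b-1\brack a}_q{b+r-a-1\brack k-a-1}_q + q^{{k-r-a+1\choose 2}}{b-1\brack a-1}_q{b+k-r-a\brack k-a}_q\right)
\]
at $q = 0$.

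First I would record what survives the $q \to 0$ limit. The $q$-binomial ${k \brack r}_q$ tends to $1$ for all $0 \le r \le k$. A power $q^{\binom{s}{2}}$ vanishes as $q \to 0$ unless $\binom{s}{2} = 0$, i.e. $s \in \{0,1\}$; so the first term in the parenthetical survives only when $k - r - a \in \{0,1\}$, and the second only when $k-r-a+1 \in \{0,1\}$, i.e. $k - r - a \in \{-1, 0\}$. Combining, for fixed $r$ the only contributing values are $a = k-r$ (both terms potentially alive) and $a = k-r-1$ (first term) and $a = k-r+1$ (second term) — subject to $0 \le a \le k$. I would also need the $q=0$ specialization $E_{n,b}\mid_{q=0}$: from the plethystic definition only the $r = 0$ and (with sign) lower-order terms survive, and it is standard (see \cite{GH}) that $E_{n,k}\mid_{q=0}$ collapses to something like $e_n$ when $k$ is in the right range and $0$ otherwise; I would pin down that $\Theta_a \Theta_b \nabla E_{c,d}\mid_{q=t=0}$ reduces to $\Theta_a \Theta_b \nabla e_c \mid_{q=t=0}$ precisely when $d$ lies in the admissible window and vanishes otherwise, and the surviving $q$-binomials in the parenthetical factor (now ordinary binomials since $q\to 0$) must also be checked to equal $1$ or $0$ on the nose. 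After substituting $a = k - r$, $a = k-r-1$, $a = k-r+1$, the index $\ell + k - j - a$ becomes $\ell - j + r$, $\ell - j + r + 1$, $\ell - j + r - 1$ respectively, and $j - r + a$ becomes $k$, $k-1$, $k+1$; then tracking the degree bookkeeping ($\nabla e_{j - r + a}$ lives in degree $j-r+a$, and the total degree must be $n - j$ where the original is degree $n$... actually I would recompute that $\Theta_{m-j+r}\Theta_{\ell+k-j-a}\nabla E_{j-r+a,b}$ carries $x$-degree $(m - j + r) + (\ell + k - j - a) + (j - r + a) = m + \ell + k - j$, consistent) gives the three sums in the statement, with the reindexing $r \mapsto r$ for the first family, and a shift absorbing the $\Theta$ index shifts for the cross terms and the last term.

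The main obstacle I anticipate is precisely the bookkeeping of which $(r, a, b)$ triples survive and verifying that the ordinary binomial coefficients ${b-1 \brack a}, {b+r-a-1\brack k-a-1}$ etc. evaluate to exactly $1$ over the surviving range (rather than some larger integer), so that the result is a clean signless sum of $\Theta\Theta\nabla e$ terms with coefficient $1$. For instance with $a = k-r$ the factor ${b-1 \brack a}{b+r-a-1\brack k-a-1} = {b-1\brack k-r}{b-1\brack r-1}$ must be checked to force $b$ into a single value or a short range; summing over the allowed $b$ is what produces (or fails to produce) multiplicity. I would organize this as a short case analysis: (i) $a = k-r$ with the two terms giving $\Theta_{m-r}\Theta_{\ell-r}\nabla e_{k - j + 2r}$ with the combined coefficient collapsing to $1$ after the $b$-sum — this yields the first family $\sum_{r=0}^j$; (ii) $a = k - r - 1$ giving $\Theta_{m - j + r}\Theta_{\ell - j + r + 1}\nabla e_{k-1}$ which after reindexing $r' = j - 1 - r$ (or a matching shift) becomes a term of the form $\Theta_{m - r' - 1}\Theta_{\ell - r'}\nabla e_{k - j + 2r' + 1}$, contributing half of the middle family; (iii) $a = k-r+1$ symmetrically giving $\Theta_{m-r}\Theta_{\ell - r - 1}\nabla e_{k-j+2r+1}$, the other half of the middle family; (iv) the "corner" overlap where both a first-term and a second-term contribution land on the same index producing the $\sum_{r=0}^{j-2}\Theta_{m-r-1}\Theta_{\ell-r-1}\nabla e_{k-j+2r+2}$ family. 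The ranges $r \in \{0,\dots,j\}$, $\{0,\dots,j-1\}$, $\{0,\dots,j-2\}$ should fall out of the constraints $0 \le a \le k$, $1 \le b \le j-r+a$, and $r \le k$ together with the nonvanishing windows for $E_{n,b}\mid_{q=0}$; checking these endpoints carefully is the delicate part, but it is finite and mechanical once the $q\to 0$ survival analysis above is in place.
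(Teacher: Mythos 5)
Your overall strategy coincides with the paper's: rewrite the left-hand side via Lemma~\ref{nabla-Hk-t=0-lemma}, then specialize Theorem~\ref{first-theta-recursion} at $q=0$, keeping only the terms whose explicit powers of $q$ survive, which forces $a \in \{k-r-1,\,k-r,\,k-r+1\}$ (the paper first substitutes $a \mapsto k-a$, so these become $|a-r| \leq 1$). The genuine gap is your treatment of the symmetric functions $E_{N,b}$. You propose to specialize them individually, asserting that $E_{n,b} \mid_{q=0}$ ``collapses to something like $e_n$'' for $b$ in an admissible window and vanishes otherwise. That is not available: the individual $E_{n,b}$ are not even regular at $q=0$ (already $E_{2,1} = -q^{-1} h_2$ and $E_{2,2} = e_2 + q^{-1} h_2$), and in the plethystic definition the $r=0$ term is $e_n[0]=0$ in positive degree, so no termwise collapse of the kind you describe can be read off. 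The statement you would actually need, that $\Theta_a \Theta_b \nabla E_{c,d} \mid_{q=t=0}$ equals $\Theta_a \Theta_b \nabla e_c \mid_{q=t=0}$ for exactly the right values of $d$ and vanishes otherwise, is a nontrivial assertion about $\nabla E_{c,d}$ that nothing in your outline establishes. The paper avoids this issue entirely: it observes that whenever a parenthesized summand survives the limit $q \to 0$ it evaluates to $1$, so the surviving coefficient carries no further dependence on $b$, and it then collapses the whole sum over $b$ using the elementary identity $\sum_{b=1}^{N} E_{N,b} = e_N$, hence $\sum_{b} \nabla E_{N,b} = \nabla e_N$; no specialization of any individual $E_{N,b}$ is ever required. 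Your proposal never invokes this identity, and without it (or a proof of the specialization claim above) the argument does not close.

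The case bookkeeping is also off in a way that affects the final answer. In the middle case $a = k-r$ \emph{both} summands in the parentheses survive at $q=0$, so this case generically contributes with coefficient $2$; the paper's Case 2 records this as the coefficient $2 - \delta_{r,0} - \delta_{r,j}$, and splitting that doubled sum and reindexing is precisely what produces \emph{both} the first family $\sum_{r=0}^{j} \Theta_{m-r}\Theta_{\ell-r}\nabla e_{k-j+2r}$ and the last family $\sum_{r=0}^{j-2} \Theta_{m-r-1}\Theta_{\ell-r-1}\nabla e_{k-j+2r+2}$. Your case (i) instead claims the combined coefficient ``collapses to $1$,'' and your case (iv) attributes the last family to a ``corner overlap'' that does not correspond to any region of summation beyond the three values of $a$ you already listed, so as organized your cases cannot reproduce the right-hand side. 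There is also an arithmetic slip: for $a = k-r,\, k-r\mp 1$ the index $j-r+a$ becomes $k+j-2r,\, k+j-2r\mp 1$, not $k,\, k\mp 1$; your later reindexing $r \mapsto j-r-1$ is only consistent with the former. The cases $a = k-r-1$ and $a = k-r+1$ do give the two halves of the middle family as you say, so the skeleton is right, but the $E_{N,b}$ step and the multiplicity-two accounting are exactly where the proof lives, and neither is closed in the proposal.
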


\begin{proof}
For convenience, we apply the transformation $a \mapsto k-a$ in Theorem~\ref{first-theta-recursion} 
to obtain
\begin{multline}
\label{second-theta-recursion}
    h_j^{\perp} \Theta_m \Theta_{\ell} \widetilde{H}_k(\xx;q,t) = 
    \sum_{r = 0}^j {k \brack r}_q \sum_{a = 0}^k \sum_{b = 1}^{k+j-r-a} 
    \Theta_{m-j+r} \Theta_{\ell - j + a} \nabla E_{k+j-r-a,b} \\
    \times \left( 
    q^{{a-r \choose 2}} {b-1 \brack k-a}_q {b-k+r+a-1 \brack a-1}_q +
    q^{{r-a \choose 2}} {b-1 \brack k-a-1}_q {b+k-r-a \brack a}_q
    \right)
\end{multline}
Our goal is to evaluate Equation~\eqref{second-theta-recursion} at $q \rightarrow 0$. From Equation~\eqref{nabla-Hk-t=0}, we see the right-hand sides of both Equation~\eqref{second-theta-recursion} and the statment in the Lemma agree.

At $q \rightarrow 0$, in order for the expression
in the parentheses of \eqref{second-theta-recursion}
to be nonzero, we must have
$-1 \leq a-r \leq 1$ (for otherwise both summands involve only positive powers of $q$).
Moreover, by examining the $q$-binomials, we see that $r \leq k$
(from the $q$-binomial on the first line of \eqref{second-theta-recursion})
and $a \leq j$ (since $b \leq k+j-r-a$, or $b-k+r+a \leq j$, the rightmost 
$q$-binomial in both summands vanishes unless $a \leq j$). Notice that, when the summands in the parentheses evaluate at $q \rightarrow 0$
to something nonzero, once evaluated they do not depend on $b$ anymore, so we can isolate the sum over $b$ and use
the identity
\begin{equation}
    \sum_{b = 1}^{k+j-r-a} \nabla E_{k+j-r-a} = \nabla e_{k+j-r-a}.
\end{equation}
We have three separate cases depending on the value of $a-r \in \{-1,0,1\}$.

{\bf Case 1.} $a-r = -1$.

In this case, the left summand $q^{{a-r \choose 2}} {b-1 \brack k-a}_q {b-k+r+a-1 \brack a-1}_q$ in the parentheses
vanishes and the right summand 
$q^{{r-a \choose 2}} {b-1 \brack k-a-1}_q {b+k-r-a \brack a}_q$ evaluates to 1 at $q \rightarrow 0$.
Since $a \geq 0$, the sum restricts to $r \geq 1$ and we have a contribution of
\[ \sum_{r=1}^j \Theta_{m-j+r} \Theta_{\ell-j+r-1} \nabla e_{k+j-2r+1} \mid_{q=t=0} ; \] now making the change of variables $r \mapsto j-r$ we get
\begin{equation}
\label{case-one-contribution}
    \sum_{r=0}^{j-1} \Theta_{m-r} \Theta_{\ell-r-1} \nabla e_{k-j+2r+1} \mid_{q=t=0}.
\end{equation}

{\bf Case 2.} $a-r = 0$.

In this case, both summands in the parentheses evaluate to 1 at $q \rightarrow 0$, except when $a = r =0 $
(in which case the left summand evaluates to 0) or when $a=r=j$
(in which case the right summand evaluates to 0). The contribution of this case is 

\[ \sum_{r = 0}^j (2 -\delta_{r,0} - \delta_{r,j}) \Theta_{m-j+r} \Theta_{\ell-j+r} \nabla e_{k+j-2r} \mid_{q=t=0} \] where $\delta_{r,0}$ and $\delta_{r,j}$ are Kronecker deltas. We can rewrite this as \[ \sum_{r = 0}^j \Theta_{m-j+r} \Theta_{\ell-j+r} \nabla e_{k+j-2r} \mid_{q=t=0} + \sum_{r = 1}^{j-1}  \Theta_{m-j+r} \Theta_{\ell-j+r} \nabla e_{k+j-2r} \mid_{q=t=0} \] and now making the change of variables $r \mapsto j-r$ in the left summand and $r \mapsto j-r-1$ in the right summand we get
\begin{equation}
\label{case-two-contribution}
    \sum_{r = 0}^{j} \Theta_{m-r} \Theta_{\ell-r} \nabla e_{k-j+2r} \mid_{q=t=0} + \sum_{r = 0}^{j-2}  \Theta_{m-r-1} \Theta_{\ell-r-1} \nabla e_{k-j+2r+2} \mid_{q=t=0}.
\end{equation}

{\bf Case 3.} $a-r = 1$.

In this case the right summand in the parentheses vanishes at $q \rightarrow 0$ and the left summand evaluates to 1.
Since $a \leq j$, the sum restricts to $r \leq j-1$. We get a contribution of \[ \sum_{r=0}^{j-1} \Theta_{m-j+r} \Theta_{\ell-j+r+1} \nabla e_{k+j-2r-1} \mid_{q=t=0}; \]
now making the change of variables $r \mapsto j-r-1$ we get
\begin{equation}
\label{case-three-contribution}
    \sum_{r=0}^{j-1} \Theta_{m-r-1} \Theta_{\ell-r} \nabla e_{k-j+2r+1} \mid_{q=t=0}.
\end{equation}

The lemma follows immediately by combining the contributions \eqref{case-one-contribution}, \eqref{case-two-contribution},
and \eqref{case-three-contribution}.
\end{proof}

We have all the tools we need to prove our main result Theorem~\ref{main-theorem}.

\begin{proof}
 (of Theorem~\ref{main-theorem})
 By Theorem~\ref{fdr-isomorphism-type}, we aim to show 
 \begin{equation}
 \label{final-goal-equation}
 \Theta_m \Theta_{\ell} \nabla e_k \mid_{q=t=0} = s_{(k+\ell,1^m)} \ast s_{(k+m,1^{\ell})} - 
 s_{(k+\ell+1,1^{m-1})} \ast s_{(k+m+1,1^{\ell-1})}
 \end{equation}
 for all integers $k,\ell,m \geq 0$.
 Notice that both sides of Equation~\eqref{final-goal-equation} are symmetric functions of 
 degree $m+\ell+k$.  When $m+\ell+k = 0$ both sides specialize to 1 so we assume that 
 $m+\ell+k > 0$ and both sides have positive total degree.
 
 We prove this result by induction on the total degree $m+\ell+k$. Let $j \geq 1$.
 Using Lemma~\ref{second-theta-recursion},
 we apply the operator $h_j^{\perp}$ to the left-hand side of  
 Equation~\eqref{final-goal-equation} yielding
   \begin{align*}
      h_j^\perp \Theta_{m} \Theta_{\ell} & \nabla e_k \rvert_{q=t=0} = \sum_{r=0}^j \left. \Theta_{m-r} \Theta_{\ell-r} \nabla e_{k-j+2r} \right\rvert_{q=t=0} \\
      & \quad + \sum_{r=0}^{j-1} \left. \left( \Theta_{m-r-1} \Theta_{\ell-r} + \Theta_{m-r} \Theta_{\ell-r-1} \right) \nabla e_{k-j+2r+1} \right\rvert_{q=t=0} \\
      & \quad + \sum_{r=0}^{j-2} \left. \Theta_{m-r-1} \Theta_{\ell-r-1} \nabla e_{k-j+2r+2} \right\rvert_{q=t=0} \\
& \\
      \text{(ind. hp.)} & = \sum_{r=0}^j  s_{k+\ell-j+r, 1^{m-r}} \ast s_{k+m-j+r, 1^{\ell-r}} - s_{k+\ell-j+r+1, 1^{m-r-1}} \ast s_{k+m-j+r+1, 1^{\ell-r-1}} \\
      & \quad + \sum_{r=0}^{j-1} s_{k+\ell-j+r+1, 1^{m-r-1}} \ast s_{k+m-j+r, 1^{\ell-r}} - s_{k+\ell-j+r+2, 1^{m-r-2}} \ast s_{k+m-j+r+1, 1^{\ell-r-1}} \\
      & \quad + \sum_{r=0}^{j-1} s_{k+\ell-j+r, 1^{m-r}} \ast s_{k+m-j+r+1, 1^{\ell-r-1}} - s_{k+\ell-j+r+1, 1^{m-r-1}} \ast s_{k+m-j+r+2, 1^{\ell-r-2}} \\
      & \quad + \sum_{r=0}^{j-2} s_{k+\ell-j+r+1, 1^{m-r-1}} \ast s_{k+m-j+r+1, 1^{\ell-r-1}} - s_{k+\ell-j+r+2, 1^{m-r-2}} \ast s_{k+m-j+r+2, 1^{\ell-r-2}} \\
& \\
      (1)-(2) \quad & = \sum_{r=0}^j  s_{k+\ell-j+r, 1^{m-r}} \ast s_{k+m-j+r, 1^{\ell-r}} - \sum_{r=0}^j s_{k+\ell-j+r+1, 1^{m-r-1}} \ast s_{k+m-j+r+1, 1^{\ell-r-1}} \\
      (3)-(4) \quad & \quad + \sum_{r=0}^{j-1} s_{k+\ell-j+r+1, 1^{m-r-1}} \ast s_{k+m-j+r, 1^{\ell-r}} - \sum_{r=1}^{j} s_{k+\ell-j+r+1, 1^{m-r-1}} \ast s_{k+m-j+r, 1^{\ell-r}} \\
      (5)-(6) \quad & \quad + \sum_{r=0}^{j-1} s_{k+\ell-j+r, 1^{m-r}} \ast s_{k+m-j+r+1, 1^{\ell-r-1}} - \sum_{r=1}^{j} s_{k+\ell-j+r, 1^{m-r}} \ast s_{k+m-j+r+1, 1^{\ell-r-1}} \\
      (7)-(8) \quad & \quad + \sum_{r=0}^{j-2} s_{k+\ell-j+r+1, 1^{m-r-1}} \ast s_{k+m-j+r+1, 1^{\ell-r-1}} - \sum_{r=2}^{j} s_{k+\ell-j+r, 1^{m-r}} \ast s_{k+m-j+r, 1^{\ell-r}} 
      \end{align*}
      where the second equality used induction on degree and the numerals 
      $(1), \dots, (8)$ on the left of the last expression abbreviate the 
      eight sums therein. We rearrange and make cancellations
      in these sums, obtaining
%
%
\begin{align*}
      (1)-(8) \quad & = s_{k+\ell-j, 1^{m}} \ast s_{k+m-j, 1^{\ell}} + s_{k+\ell-j+1, 1^{m-1}} \ast s_{k+m-j+1, 1^{\ell-1}} \\
      (3)-(4) \quad & \quad + s_{k+\ell-j+1, 1^{m-1}} \ast s_{k+m-j, 1^{\ell}} - s_{k+\ell+1, 1^{m-j-1}} \ast s_{k+m, 1^{\ell-j}} \\
      (5)-(6) \quad & \quad + s_{k+\ell-j, 1^{m}} \ast s_{k+m-j+1, 1^{\ell-1}} - s_{k+\ell, 1^{m-j}} \ast s_{k+m+1, 1^{\ell-j-1}} \\
      (7)-(2) \quad & \quad - s_{k+\ell, 1^{m-j}} \ast s_{k+m, 1^{\ell-j}} - s_{k+\ell+1, 1^{m-j-1}} \ast s_{k+m+1, 1^{\ell-j-1}}. 
\end{align*}

Reindexing these eight summands with $(a), \dots, (h)$
and applying Equation~\eqref{eh-to-hook} gives
\begin{align*}
      (a)+(b) \quad & = s_{k+\ell-j, 1^{m}} \ast s_{k+m-j, 1^{\ell}} + s_{k+\ell-j+1, 1^{m-1}} \ast s_{k+m-j+1, 1^{\ell-1}} \\
      (c)-(d) \quad & \quad + s_{k+\ell-j+1, 1^{m-1}} \ast s_{k+m-j, 1^{\ell}} - s_{k+\ell+1, 1^{m-j-1}} \ast s_{k+m, 1^{\ell-j}} \\
      (e)-(f) \quad & \quad + s_{k+\ell-j, 1^{m}} \ast s_{k+m-j+1, 1^{\ell-1}} - s_{k+\ell, 1^{m-j}} \ast s_{k+m+1, 1^{\ell-j-1}} \\
      -(g)-(h) \quad & \quad - s_{k+\ell, 1^{m-j}} \ast s_{k+m, 1^{\ell-j}} - s_{k+\ell+1, 1^{m-j-1}} \ast s_{k+m+1, 1^{\ell-j-1}} \\
& \\
      (a)+(c) \quad & = h_{k+\ell-j} e_{m} \ast s_{k+m-j, 1^{\ell}} \\
      (b)+(e) \quad & \quad + h_{k+\ell-j} e_{m} \ast s_{k+m-j+1, 1^{\ell-1}} \\
      -(d)-(g) \quad & \quad - h_{k+\ell} e_{m-j} \ast s_{k+m, 1^{\ell-j}} \\
      -(f)-(h) \quad & \quad - h_{k+\ell} e_{m-j} \ast s_{k+m+1, 1^{\ell-j-1}} \\
& \\
      & = h_{k+\ell-j} e_{m} \ast h_{k+m-j} e_{\ell} - h_{k+\ell} e_{m-j} \ast h_{k+m} e_{\ell-j} \\
& \\
%
    & = h_j^\perp (s_{k+\ell, 1^m} \ast s_{k+m, 1^\ell} - s_{k+\ell+1, 1^{m-1}} \ast s_{k+m+1, 1^{\ell-1}})
  \end{align*}
  where the last step uses Lemma~\ref{hook-kronecker-recursion}.
  In summary, we have
  \begin{equation}
            h_j^\perp \Theta_{m} \Theta_{\ell}  \nabla e_k \rvert_{q=t=0} =  
            h_j^\perp (s_{k+\ell, 1^m} \ast s_{k+m, 1^\ell} - s_{k+\ell+1, 1^{m-1}} \ast s_{k+m+1, 1^{\ell-1}})
  \end{equation}
  and since this holds for every $j \geq 1$, 
  by Lemma~\ref{h-perp-recursion} we can deduce that Equation~\eqref{final-goal-equation} holds. This completes the proof of Theorem~\ref{main-theorem}.
\end{proof}

\section{Acknowledgements}

B. Rhoades was partially supported by NSF DMS-1953781.
M. Romero was partially supported by the NSF Mathematical Sciences Postdoctoral Research Fellowship DMS-1902731.

\end{document}